\theoremstyle{plain}
\newtheorem{theorem}{Theorem}[section]
\newtheorem{lemma}[theorem]{Lemma}
\newtheorem{corollary}[theorem]{Corollary}
\newtheorem*{theorem*}{Theorem}
\newtheorem{question}[theorem]{Question}
\theoremstyle{definition}
\newcommand{\BIGOP}[1]{\mathop{\mathchoice%
{\raise-0.22em\hbox{\huge $#1$}}%
{\raise-0.05em\hbox{\Large $#1$}}{\hbox{\large $#1$}}{#1}}}
\newcommand\RP{\mathbb{R}{\rm P}}
\newcommand\CP{\mathbb{C}{\rm P}}
\newcommand{\R}{\mathbb{R}}
\newcommand{\C}{\mathbb{C}}
\newcommand{\F}{\mathbb{F}}
\newcommand{\B}{\mathrm{B}}
\newcommand{\E}{\mathrm{E}}
\newcommand\Sym{\mathfrak S}
\newcommand{\im}{\operatorname{im}}
\newcommand{\Id}{\operatorname{Id}}
\newcommand{\rank}{\operatorname{rank}}
\newcommand{\diam}{\operatorname{diam}}
\newcommand{\conf}{\operatorname{Conf}}
\newcommand{\norm}[1]{\left\lVert#1\right\rVert}
\newcommand{\length}{\operatorname{length}}
\newcommand{\interior}{\operatorname{int}}
\newcommand{\spann}{\operatorname{span}}
\newcounter{commentcounter}
\begin{document}

\title{Local multiplicity of continuous maps between manifolds}

\author[Blagojevi\'c]{Pavle V. M. Blagojevi\'{c}} 
\thanks{The research by Pavle V. M. Blagojevi\'{c} leading to these results has
        received funding from DFG via Berlin Mathematical School, and the grant ON 174008 of the Serbian Ministry of Education and Science.}
\address{Inst. Math., FU Berlin, Arnimallee 2, 14195 Berlin, Germany\hfill\break%
\mbox{\hspace{4mm}}Mat. Institut SANU, Knez Mihailova 36, 11001 Beograd, Serbia}
\email{blagojevic@math.fu-berlin.de} 
\author[Roman Karasev]{Roman Karasev}
\thanks{The research by Roman Karasev leading to these results has
        received funding from the Russian Foundation for Basic Research grants 15-31-20403 (mol\_a\_ved) and 15-01-99563 A}
\address{Moscow Institute of Physics and Technology, Institutskiy per. 9, Dolgoprudny, \hfill\break
\mbox{\hspace{4mm}}Russia 141700\hfill\break
\mbox{\hspace{4mm}}Institute for Information Transmission Problems RAS, Bolshoy Karetny per. 19, \hfill\break
\mbox{\hspace{4mm}}Moscow, Russia 127994}
\email{r\_n\_karasev@mail.ru}


\date{\today}



\date{}

\maketitle

\begin{abstract}
Let $M$ and $N$ be smooth (real or complex) manifolds, and let $M$ be equipped with some Riemannian metric.
A continuous map $f\colon M\longrightarrow N$ admits a local $k$-multiplicity if, for every real number $\omega >0$, there exist $k$ pairwise distinct points $x_1,\ldots,x_k$ in $M$ such that $f(x_1)=\cdots=f(x_k)$ and $\diam\{x_1,\ldots,x_k\}<\omega$.
In this paper we systematically study the existence of local $k$-mutiplicities and derive criteria for the existence of local $k$-multiplicity in terms of Stiefel--Whitney classes and Chern classes of the vector bundle $f^*\tau N\oplus(-\tau M)$.
For example, as a corollary of one criterion we deduce that for $k\geq 2$ a power of $2$, $M$ a compact smooth manifold with the integer $s:=\max\{\ell : \bar{w}_{\ell}(M)\neq 0\}$, and $N$ a parallelizable smooth manifold, if $s\geq \dim N-\dim M+1$ and $\bar{w}_{s}(M)^{k-1}\neq 0$, any continuous map $M\longrightarrow N$ admits a local $k$-multiplicity.
Furthermore, as a special case of this corollary we recover, when $k=2$, the classical criterion for the non-existence of an immersion $M\looparrowright N$ between manifolds $M$ and $N$.
	
\end{abstract}


\section{Introduction}

Let $M$ and $N$ be smooth manifolds, and let $k\geq 2$ be an integer. 
A continuous map $f\colon M\longrightarrow N$ {\em admits a $k$-multiplicity} if there exist $k$ pairwise distinct points $x_1,\ldots,x_k$ on $M$ such that 
\[
f(x_1)=\cdots=f(x_k).
\]
For example, a continuous map $f$ that admits a $2$-multiplicity is not a (topological) embedding.
An interesting result of Gromov on $k>2$ multiplicities \cite[p.\,447]{Gromov2010} shows that for every $m$-dimensional manifold $M$ there exists a smooth map $M\longrightarrow\R^m$ that does not admit $k$-multiplicity for $k\geq 4m+1$.

\medskip
Let us in addition assume that the manifold $M$ is equipped with some Riemannian metric.
A continuous map $f\colon M\longrightarrow N$ {\em admits a local $k$-multiplicity} if, for every real number $\omega >0$,  there exist $k$ pairwise distinct points $x_1,\ldots,x_k$ in $M$ such that 
\[
f(x_1)=\cdots=f(x_k)
\qquad\text{and}\qquad
\diam\{x_1,\ldots,x_k\}<\omega.
\]
For example, the map $f\colon \C\longrightarrow\C$ given by $f(z)=z^k$ admits a local $k$-multiplicity. 
Existence of a local $k$-multiplicity for the map $f$ implies the existence of a $k$-multiplicity for the same map. 
In the case $k=2$ if a smooth map $f\colon M\longrightarrow N$ admits a local $2$-multiplicity, then $f$ is not an immersion.
The property of being an immersion is of course stronger than the property of having no local $2$-multiplicity.

Although the notion of the local multiplicity for continuous maps is a natural extension of the non-immersibility property for smooth maps, it has not been not studied systematically before.
In this paper we develop, and apply, a topological framework to study the existence of local multiplicity of continuous maps between real or complex manifolds.

\subsection{The statements of the main results}

The central results of this paper are the following two theorems that, for a given continuous map, give a cohomological criterion for the existence of local multiplicity.  

\begin{theorem}
	\label{th : main 0}
	Let $k\geq 2$ be a power of $2$, let $M$ be a compact smooth manifold, let $N$ be a smooth manifold, and let $f\colon M \longrightarrow N$ be a continuous map.
	Denote by $w_i:=w_i(f^*\tau N\oplus(-\tau M))$ the $i$-th Stiefel--Whitney class of the vector bundle $f^*\tau N\oplus(-\tau M)$ for $i\geq 0$, and $w_i=0$ for $i<0$.
	
	If there exists an integer $s\ge 0$ such that the characteristic class 
	\[
	u_s(f^*\tau N\oplus(-\tau M)):=\det (w_{\dim N-\dim M + 1 + s -i+j})_{1\leq i,j\leq k-1}
	\]
	does not vanish, then the continuous map $f$ admits a local $k$-multiplicity.
\end{theorem}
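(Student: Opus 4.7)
The strategy is to argue by contradiction: assuming $f$ has no local $k$-multiplicity, we produce a $G$-equivariant fiberwise map between certain sphere bundles over $M$ for the elementary abelian $2$-subgroup $G = (\Z/2)^r \subset \Sym_k$, and then apply equivariant obstruction theory (Gysin sequence) combined with the Jacobi--Trudi identity for rectangular Schur polynomials to derive $u_s(\xi) = 0$, contradicting the hypothesis.

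\emph{Step 1 (equivariant fiberwise map).} Suppose $f$ admits no local $k$-multiplicity, so for some $\omega > 0$ the map $f^k$ sends the configuration space
\[ C_\omega(M) := \{(x_1, \ldots, x_k) \in M^k : x_i \ \text{distinct},\ \diam\{x_1, \ldots, x_k\} < \omega\} \]
into $N^k \setminus \Delta_N$. Using the Riemannian exponential map on $M$, for small $\omega$ the $\Sym_k$-space $C_\omega(M)$ deformation retracts onto the sphere bundle $S(\tau M \otimes V_k)$ of the normal bundle of the diagonal $\Delta_M \subset M^k$, where $V_k := \{v \in \R^k : \sum v_i = 0\}$ is the reduced standard $\Sym_k$-representation. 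By continuity of $f$, for small $\omega$ the image $f^k(C_\omega(M))$ sits inside a deleted tubular neighborhood of $\Delta_N$ which $\Sym_k$-retracts onto $S(\tau N \otimes V_k) \to N$; pulling back yields a $\Sym_k$-equivariant fiberwise map of sphere bundles over $M$,
\[ \varphi \colon S(\tau M \otimes V_k) \longrightarrow S(f^*\tau N \otimes V_k). \]

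\emph{Step 2 (Gysin obstruction in Borel cohomology).} Since $k = 2^r$, restrict to $G = (\Z/2)^r \subset \Sym_k$ acting on $\{1, \ldots, k\} \cong G$ by translations. Under restriction $V_k = \bigoplus_{\chi \neq 0} L_\chi$ decomposes as the sum of nontrivial one-dimensional real characters, with first Stiefel--Whitney classes the nonzero $\F_2$-linear forms $t_\chi \in H^1(BG; \F_2)$, where $H^*(BG; \F_2) = \F_2[t_1, \ldots, t_r]$. Viewing $\varphi$ as a nowhere-zero $G$-equivariant section of $\pi^*(f^*\tau N \otimes V_k)$ over $S(\tau M \otimes V_k)$, the primary obstruction is the equivariant top Stiefel--Whitney class; its vanishing, by the $G$-equivariant Gysin sequence of $S(\tau M \otimes V_k) \to M$, is equivalent to the divisibility
\[ w^G_{n(k-1)}(f^*\tau N \otimes V_k) = w^G_{m(k-1)}(\tau M \otimes V_k) \cdot \alpha \qquad \text{in } H^*_G(M; \F_2), \]
for some class $\alpha$ of total degree $d(k-1)$, where $d := \dim N - \dim M$, $m := \dim M$, $n := \dim N$.

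\emph{Step 3 (extraction of $u_s$).} By the splitting principle both sides factor as $\prod_{\chi \neq 0}$ of per-character Euler classes. Setting $\xi := f^*\tau N \oplus (-\tau M)$, formal division in the Laurent expansion about $t_\chi = \infty$ gives
\[ \alpha = \prod_{\chi \neq 0} \Bigl(\sum_{l \ge 0} w_l(\xi)\, t_\chi^{\,d-l}\Bigr); \]
the requirement that $\alpha$ be a genuine polynomial in the $t_\chi$ (i.e.\ that all negative-degree coefficients of the product vanish) imposes, for each $s \ge 0$, a relation which, via the Jacobi--Trudi identity for the rectangular Schur polynomial $s_{((k-1)^{d+1+s})}$, unwinds as the vanishing of
\[ u_s(\xi) = \det\bigl(w_{d+1+s-i+j}(\xi)\bigr)_{1 \le i, j \le k-1}. \]
This contradicts the hypothesis, so $f$ must admit a local $k$-multiplicity.

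\emph{Main obstacle.} The crux is Step 3: translating the equivariant Gysin-divisibility into the Schur-determinant identity for $u_s$ via Jacobi--Trudi, while handling the virtual nature of $\xi = f^*\tau N - \tau M$ in Borel-equivariant cohomology over $(\Z/2)^r$. Steps 1--2 are more routine: they require care with the equivariance of the retractions and the interpretation of the fiberwise obstruction in equivariant cohomology, but both are standard equivariant-topology maneuvers.
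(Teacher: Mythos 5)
Your overall strategy is in the right spirit (turn absence of local $k$-multiplicity into an equivariant fiberwise map, then contradict the hypothesis by a characteristic-class computation whose combinatorial core is the Jacobi--Trudi/Schur determinant — this parallels the paper's use of the dual Cauchy identity and the N\"agelsbach--Kostka formula), but Step 1 contains a genuine error that Step 2 inherits. The local configuration space $C_\omega(M)$ does \emph{not} $\Sym_k$-deformation retract onto the sphere bundle $S(\tau M\otimes V_k)$ when $k\geq 3$: $C_\omega(M)$ is obtained by removing the \emph{fat} diagonal (any two coordinates equal), so fiberwise it has the homotopy type of the configuration space $\conf(\R^m,k)$, whereas $S(\tau M\otimes V_k)$ is the boundary of a tubular neighborhood of the \emph{thin} diagonal only (already $\conf(\R^2,3)\not\simeq S^3$). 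The natural equivariant map goes from $C_\omega(M)$ \emph{to} $S(\tau M\otimes V_k)$ (center and normalize), not the other way: a point of $S(\tau M\otimes V_k)$ is a centered $k$-tuple of tangent vectors that need not be pairwise distinct, so it does not produce a configuration. Hence you cannot view your map $\varphi$ as a nowhere-zero section over the sphere bundle, and the $G$-equivariant Gysin sequence of $S(\tau M\otimes V_k)\to M$, together with the divisibility relation it yields, is simply not available. Your picture is correct only for $k=2$, which is exactly the classical non-immersion argument.

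Once the source is the configuration bundle rather than a sphere bundle, the cohomological input changes qualitatively, and this is where the paper does real work that your Step 3 bypasses. The paper first converts local multiplicity into $k$-multiplicity of a fiberwise map $E(\tau M)\to f^*E(\tau N)$ via exponential maps, then stabilizes by $-\tau M$ (using compactness and an embedding $M\subset\R^{2m}$) so that the relevant base becomes $M\times\conf(\R^{2m},k)/\Sym_k$, and reduces everything to the nonvanishing of the top Stiefel--Whitney class of $\big(p_1^*(f^*\tau N\oplus(-\tau M))\otimes_\R \underline{W_k}\big)/\Sym_k$. The key facts are that $w_{k-1}(\zeta)^j\neq 0$ exactly for $0\leq j\leq 2m-1$ (this uses $k$ a power of $2$ and the Blagojevi\'c--L\"uck--Ziegler computations) together with a non-cancellation lemma in $H^*(\conf(\R^{2m},k)/\Sym_k;\F_2)$; only then does the Cauchy/Schur expansion isolate the surviving term $u_s(\xi)\otimes w_{k-1}(\zeta)^{2m-1-s}$. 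Your formal Laurent-division in a polynomial ring $\F_2[t_1,\dots,t_r]$ ignores precisely this truncation: if it worked as stated it would force $u_s(\xi)=0$ for \emph{all} $s\geq 0$ with no dimension restriction, which is stronger than what is true and signals that the divisibility you invoke is not the correct obstruction statement over the actual (configuration-space) base.
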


\medskip
In the case when both manifolds $M$ and $N$ allow almost complex structure an additional criterion can be used.

\begin{theorem}
	\label{th : main 0-C}
	Let $k\geq 2$ be an odd prime, let $M$ be a compact smooth almost complex manifold, and let $f\colon M \longrightarrow N$ be a continuous map.
	Furthermore, let us denote by $c_i:=c_i(f^*\tau N\oplus(-\tau M))$ the $i$-th Chern class mod $k$ of the complex vector bundle $f^*\tau N\oplus(-\tau M)$ for $i\geq 0$, and $c_i=0$ for $i<0$.
	If there exists an integer $s\ge 0$ such that the characteristic class 
	\[
	v_s(f^*\tau N\oplus(-\tau M)):=\det (c_{\dim N - \dim M + 1 + s -i+j})_{1\leq i,j\leq k-1}
	\]
	does not vanish, then the continuous map $f$ admits a local $k$-multiplicity.
\end{theorem}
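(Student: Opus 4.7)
The plan is to parallel the argument for Theorem~\ref{th : main 0}, but with the cyclic group $G:=\Z/k$ acting on ordered $k$-tuples by cyclic permutation (this action is free away from the diagonal because $k$ is an odd prime), with complex vector bundles in place of real ones, with Chern classes in place of Stiefel--Whitney classes, and with $\F_k$-coefficients throughout. I would first assume, for contradiction, that $f$ admits no local $k$-multiplicity: there exists $\omega>0$ such that $f^k$ sends the ordered thin configuration space
\[
\mathcal C(\omega):=\{(x_1,\ldots,x_k)\in M^k : x_i\ne x_j,\ \diam\{x_i\}<\omega\}
\]
into $N^k\setminus\Delta_N$. Using the exponential maps on $M$ and on $N$ (with a chosen auxiliary metric), $\mathcal C(\omega)$ retracts $G$-equivariantly onto a bundle $\mathcal A\to M$ whose fiber over $x$ is the complement of the diagonal subspace arrangement inside $T_xM\otimes_{\C}W$, where $W$ is the reduced regular complex $G$-representation; correspondingly, the target retracts near its diagonal onto the complement of the zero-section of $f^*\tau N\otimes W$. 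Under this linearization the hypothesis becomes a $G$-equivariant nowhere-zero section of $f^*\tau N\otimes W$ over $\mathcal A$. Moreover $\mathcal A$ already carries a tautological $G$-equivariant nowhere-zero section of $\tau M\otimes W$, given by the inclusion $\mathcal A\hookrightarrow\tau M\otimes W$.

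Both nowhere-zero sections force the equivariant Euler classes $e^G(\tau M\otimes W)$ and $e^G(f^*\tau N\otimes W)$ to restrict to zero in $H^*_G(\mathcal A;\F_k)$. A Serre spectral-sequence analysis of the fibration $\mathcal A/G\to M$, combined with the computation of the $G$-equivariant cohomology of the complement of the complex diagonal arrangement in $V\otimes W$ (tractable because $k$ is prime, so every non-identity element of $G$ acts freely on $W$), should show that the kernel of the restriction $H^*_G(M;\F_k)\to H^*_G(\mathcal A;\F_k)$ is precisely the ideal generated by $e^G(\tau M\otimes W)$. This would promote the vanishing to the divisibility
\[
e^G(f^*\tau N\otimes W)\;\in\;\bigl(e^G(\tau M\otimes W)\bigr)\cdot H^*_G(M;\F_k).
\]

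For the closing characteristic-class computation I would decompose $W=\bigoplus_{j=1}^{k-1}\chi_j$ with $c_1(\chi_j)=jy$ for $y\in H^2(BG;\F_k)$ the canonical generator, and apply the mod-$k$ identity $\prod_{j=1}^{k-1}(t+ja)=t^{k-1}-a^{k-1}$ to Chern roots, yielding
\[
e^G(E\otimes W)=\prod_i\bigl(\alpha_i^{k-1}-y^{k-1}\bigr)
\]
for any complex bundle $E$ with Chern roots $\alpha_i$. Applied to both $f^*\tau N$ and $\tau M$, the divisibility becomes a polynomial identity in $y$ between the Chern classes of these bundles; expressing this constraint via Jacobi--Trudi in terms of the Chern classes of the virtual bundle $f^*\tau N\oplus(-\tau M)$, the obstruction class in the appropriate bidegree is precisely the determinantal class $v_s$. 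Nonvanishing of $v_s$ thus contradicts the divisibility, completing the argument.

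The main obstacle will be the cohomological step: determining the kernel of $H^*_G(M;\F_k)\to H^*_G(\mathcal A;\F_k)$ as the ideal generated by $e^G(\tau M\otimes W)$, which hinges on a careful equivariant cohomology computation for the complement of the complex diagonal arrangement in $T_xM\otimes W$. A secondary technical difficulty is the Jacobi--Trudi bookkeeping, where one must track the precise degree shift $\dim N-\dim M+1$ in matching the extracted coefficient to the determinantal form of $v_s$.
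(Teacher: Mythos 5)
Your overall architecture (linearize via exponential maps, pass to the fiberwise configuration space of $\tau M$, and detect the obstruction through an equivariant Euler class of a $W$-twisted bundle, with the Cauchy/Jacobi--Trudi collapse $\prod_{j=1}^{k-1}(t+jy)=t^{k-1}-y^{k-1}$ producing exactly the determinants $v_s$) is parallel to the paper, and your final bookkeeping step is essentially equivalent to the paper's computation, since for the relevant rank-$(k-1)$ bundle only $c_0$ and $c_{k-1}$ survive. The genuine gap is the middle step, which you yourself flag as the ``main obstacle'': the assertion that the kernel of $H^*_G(M;\F_k)\longrightarrow H^*_G(\mathcal A;\F_k)$ is (or is contained in) the ideal generated by $e^G(\tau M\otimes W)$. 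The inclusion you actually need for the divisibility, namely kernel $\subseteq\langle e^G(\tau M\otimes W)\rangle$, is precisely the hard direction, and it does not follow from a routine Serre spectral sequence comparison: for classes of base degree zero (pure powers of $y$) the comparison argument works, because such classes can neither be hit by differentials nor support them after mapping; but a class $a\otimes y^{j}$ with $a\in H^{>0}(M;\F_k)$ can die in $H^*(\mathcal A/G;\F_k)$ through differentials of the (in general twisted, since $\mathcal A\to M$ is built from $\tau M$) spectral sequence of $\mathcal A/G\to M$, reasons invisible fiberwise. So the claimed parametrized index computation amounts to a Leray--Hirsch-type freeness statement for $H^*_G(\conf(\C^m,k))$ over $H^*(BG;\F_k)/\langle y^{m(k-1)}\rangle$ carried along the tangent bundle of $M$, which is neither proved in your sketch nor obviously true; even the fiberwise input (the $\Z/k$-index of the configuration space being the principal ideal $\langle y^{m(k-1)}\rangle$) is a nontrivial theorem that must be invoked.

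The paper avoids this entirely by a stabilization trick you do not use: by Lemma~\ref{lem : stabilization} one may add an inverse bundle $-\tau M$ to the source, so the fiberwise configuration space becomes the trivial bundle $M\times\conf(\C^{m+m'},k)$; after quotienting by $\Sym_k$ the K\"unneth formula applies with no twisted coefficients, and the proof reduces to showing directly that the top Chern class of $\big(p_1^*(f^*\tau N\oplus(-\tau M))\otimes_\C p_1^*(\underline{W_k}\otimes_\R\C)\big)/\Sym_k$ is nonzero. That class equals $\sum_t v_t(f^*\tau N\oplus(-\tau M))\otimes c_{k-1}(\zeta)^t$, and the only nontrivial topological inputs are $c(\zeta)=1+c_{k-1}(\zeta)$ and the nonvanishing $c_{k-1}(\zeta)^j\neq 0$ for $j\leq m+m'-1$ (Lemma~\ref{lem : cohomology with F_p coefficients} and \cite[Thm.\,4.1]{Blagojevic2015-02}); no computation of the kernel of a restriction map is needed. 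Note also that the paper's dimension constraint $r=m+m'-1-s\geq 0$ enters exactly through this nonvanishing range, whereas your unstabilized divisibility statement has no such truncation built in; if you keep your route, you must either prove the parametrized kernel claim (at least the one inclusion) or replace it by the stabilization argument, after which your divisibility formulation collapses to the paper's direct Euler-class computation.
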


\noindent
In both theorems $-\tau M$ denotes the inverse (real or complex) vector bundle of the tangent vector bundle $\tau M$.

\medskip
A special case of Theorem \ref{th : main 0} is the following result. 

\begin{theorem}
	\label{th : main 1}
	Let $k\geq 2$ be a power of $2$, let $M$ be a compact smooth manifold with $s:=\max\{\ell : \bar{w}_{\ell}(M)\neq 0\}$, and let $N$ be a parallelizable smooth manifold.
	If $s\ge \dim N - \dim M + 1$ and $\bar{w}_{s}(M)^{k-1}\neq 0$, then any continuous map $M\longrightarrow N$ admits a local $k$-multiplicity.
\end{theorem}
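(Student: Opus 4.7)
The plan is to deduce Theorem \ref{th : main 1} as a direct corollary of Theorem \ref{th : main 0} by choosing the free parameter there appropriately and computing the resulting determinant. Write $d := \dim N - \dim M$ for brevity, and let me denote by $s$ the maximal integer from the hypothesis of Theorem \ref{th : main 1}. Since $N$ is parallelizable, its tangent bundle $\tau N$ is trivial, and hence so is the pullback $f^{\ast}\tau N$ for any continuous map $f\colon M\longrightarrow N$. By the Whitney sum formula together with the defining relation of the dual Stiefel--Whitney classes, I would obtain
\[
w_i\bigl(f^{\ast}\tau N \oplus (-\tau M)\bigr) = w_i(-\tau M) = \bar{w}_i(M), \qquad i\geq 0.
\]

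Next I would apply Theorem \ref{th : main 0} with its own parameter (confusingly also denoted $s$ in that statement) set equal to $\sigma := s - (d+1)$. The hypothesis $s\geq d+1$ from Theorem \ref{th : main 1} is precisely what guarantees $\sigma\geq 0$, so the criterion is applicable. With this shift, the exponent $d+1+\sigma-i+j$ simplifies to $s-i+j$, and the relevant characteristic class becomes
\[
u_{\sigma}\bigl(f^{\ast}\tau N \oplus (-\tau M)\bigr) = \det\bigl(\bar{w}_{s-i+j}(M)\bigr)_{1\leq i,j\leq k-1}.
\]

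The main step is then to evaluate this $(k-1)\times(k-1)$ determinant using the maximality of $s$. For any entry strictly above the diagonal one has $j>i$, so the index $s-i+j$ exceeds $s$, and by the definition of $s$ the class $\bar{w}_{s-i+j}(M)$ vanishes. The matrix is therefore lower triangular, with every diagonal entry equal to $\bar{w}_s(M)$, so its determinant collapses to $\bar{w}_s(M)^{k-1}$. The hypothesis $\bar{w}_s(M)^{k-1}\neq 0$ then gives $u_{\sigma}\neq 0$, and Theorem \ref{th : main 0} concludes that $f$ admits a local $k$-multiplicity.

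I do not expect any real obstacle here: the argument is essentially bookkeeping once one recognises that the Jacobi--Trudi-like determinant in the statement of Theorem \ref{th : main 0} reduces to a triangular determinant as soon as the input Stiefel--Whitney classes vanish above a fixed degree. The only genuine point of care is to keep the two roles of the letter $s$ (the external maximal degree from Theorem \ref{th : main 1} and the internal parameter of Theorem \ref{th : main 0}) distinct, and to check that the shift $\sigma=s-(d+1)$ is non-negative, which is exactly the codimension hypothesis in the theorem.
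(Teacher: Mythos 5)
Your argument is correct and is essentially the proof given in the paper: the paper likewise uses triviality of $f^{*}\tau N$ to replace all classes by $\bar{w}_i(M)$, observes that maximality of $s$ makes the determinant lower triangular with diagonal $\bar{w}_s(M)$, and checks that the hypothesis $s\ge \dim N-\dim M+1$ is exactly the admissible range of the parameter in Theorem \ref{th : main 0} (the paper phrases this via its internal index $r=m-s$ and the class $u_{n+r}$, which is the same bookkeeping as your shift $\sigma=s-(d+1)$). No gaps; nothing further is needed.
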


\noindent
Here $\bar{w}_{i}(M)$ denotes the dual $i$-th Stiefel--Whitney class of the tangent vector bundle $\tau M$.
An assumption that the manifold $N$ is parallelizable in Theorem~\ref{th : main 1} can be weakened and we can assume that $w(f^*\tau N)=1$ instead.
Moreover, in the case when $k=2$ the statement of Theorem~\ref{th : main 1} yields the classical obstruction for the non-existence of an immersion $M\looparrowright N$ between manifolds $M$ and $N$, see for example \cite[Cor.\,3.5]{Milnor1974} 

\medskip
A similar consequence of Theorem \ref{th : main 0-C} can be derived in the case, when $k$ is an odd prime.

\begin{theorem}
	\label{th : main 1-C}
	Let $k\geq 2$ be an odd prime, let $M$ be a compact smooth almost complex manifold with $s:=\max\{\ell : \bar{c}_{\ell}(M)\neq 0\}$, let $N$ be a parallelizable smooth complex manifold, and let $f\colon M \longrightarrow N$ be a continuous map.
	If $s\ge \dim N - \dim M + 1$ and $\bar{c}_{s}(M)^{k-1}\neq 0$, then any continuous map $M\longrightarrow N$ admits a local $k$-multiplicity.
\end{theorem}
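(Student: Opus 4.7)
The plan is to deduce Theorem~\ref{th : main 1-C} from Theorem~\ref{th : main 0-C} by producing a nonnegative integer $s'$ for which the characteristic class $v_{s'}(f^*\tau N\oplus(-\tau M))$ is nonzero in $H^*(M;\F_k)$. The setup is designed so that the determinantal expression defining $v_{s'}$ collapses to a simple power of $\bar{c}_s(M)$.

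First I would invoke the parallelizability hypothesis on $N$: since the complex tangent bundle $\tau N$ is trivial, its pullback $f^*\tau N$ is a trivial complex vector bundle, so $c_i(f^*\tau N)=0$ for every $i\ge 1$. The Whitney product formula then identifies
\[
c_i(f^*\tau N\oplus(-\tau M)) \;=\; c_i(-\tau M) \;=\; \bar{c}_i(M) \pmod{k}
\]
for every $i\ge 0$. I would then set $s' := s - (\dim N - \dim M + 1)$; the assumption $s \ge \dim N - \dim M + 1$ guarantees $s'\ge 0$. With this choice, the $(i,j)$-entry of the matrix defining $v_{s'}$ is exactly $\bar{c}_{s+j-i}(M)$.

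The key observation is that the extremality of $s$---i.e. $\bar{c}_\ell(M)=0$ mod $k$ for all $\ell>s$---forces every entry strictly above the main diagonal to vanish, while all diagonal entries equal $\bar{c}_s(M)$. The matrix is therefore lower triangular, and its determinant collapses to
\[
v_{s'}(f^*\tau N\oplus(-\tau M)) \;=\; \bar{c}_s(M)^{k-1},
\]
which is nonzero by hypothesis. Theorem~\ref{th : main 0-C} applied with this value of $s'$ then yields the desired local $k$-multiplicity.

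I do not anticipate any serious obstacle: the argument is the direct complex analogue of the proof one would write for Theorem~\ref{th : main 1}, and once the matrix is written down, its lower-triangularity is immediate. The odd-prime hypothesis on $k$ enters only to the extent that Theorem~\ref{th : main 0-C} requires it; no sign issues arise in the collapse of the determinant because $k-1$ is even.
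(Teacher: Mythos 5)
Your proposal is correct and is essentially the paper's own argument: the paper likewise uses parallelizability of $N$ to reduce the Chern classes of $f^*\tau N\oplus(-\tau M)$ to $\bar{c}(M)$, chooses the shift in the determinantal class so that its entries are $\bar{c}_{s-i+j}(M)$ (the paper phrases this as evaluating $v_{n+r}$ with $r=m'-s$, which equals your $v_{s'}$), and observes that maximality of $s$ makes the matrix triangular with determinant $\bar{c}_s(M)^{k-1}\neq 0$, after checking the index bound coming from $s\ge\dim N-\dim M+1$. No gaps.
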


\noindent
Here $\bar{c}_{i}(M)$ denotes the $i$-th Chern class mod $k$ of the inverse of the complex tangent vector bundle $\tau M$, and {\bf not} the $i$-th Chern class of the dual complex vector bundle.

\medskip
Using well known facts about the Stiefel--Whitney classes of tangent bundles of projective spaces we derive following corollaries of Theorem \ref{th : main 1}.

\begin{corollary}
	\label{cor : main 2}
	Let $a\geq 1$ and $\ell\geq1$ be integers, let $k\geq 2$ be a power of $2$, and let $k(a+1)\leq 2^{\ell}-1$.
	Then any continuous map  
	\[
	\RP^{2^{\ell}-2-a}\longrightarrow \R^{2^{\ell}-2}
	\]
	admits a local $k$-multiplicity.
\end{corollary}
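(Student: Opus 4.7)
The plan is to verify the hypotheses of Theorem~\ref{th : main 1} directly, applied to $M=\RP^{2^{\ell}-2-a}$ and $N=\R^{2^{\ell}-2}$. The target $\R^{2^{\ell}-2}$ is parallelizable as required, and $\dim N-\dim M+1=a+1$.

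First I would compute the total dual Stiefel--Whitney class of $M$. Setting $n=2^{\ell}-2-a$ and using the standard formula $w(\tau\RP^n)=(1+x)^{n+1}$, where $x$ denotes the generator of $H^{1}(\RP^n;\F_2)$, the identity
\[
(1+x)^{n+1}(1+x)^{a+1}=(1+x)^{2^{\ell}}=1+x^{2^{\ell}}
\]
reduces to $1$ in $H^{*}(\RP^n;\F_2)=\F_2[x]/(x^{n+1})$, since $n+1=2^{\ell}-1-a<2^{\ell}$. Hence $\bar w(M)=(1+x)^{a+1}$ and $\bar w_i(M)=\binom{a+1}{i}x^{i}$.

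Next I determine $s:=\max\{i:\bar w_i(M)\neq 0\}$. The assumption $k(a+1)\le 2^{\ell}-1$ with $k\ge 2$ forces $a+1\le(2^{\ell}-1)/2\le n$, so $x^{a+1}\neq 0$ in $H^{*}(\RP^n;\F_2)$, while $\binom{a+1}{i}=0$ for $i>a+1$. Therefore $s=a+1$, and this exactly meets the bound $s\ge\dim N-\dim M+1=a+1$ demanded by Theorem~\ref{th : main 1}.

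Finally I would verify $\bar w_{s}(M)^{k-1}\neq 0$. Since $\bar w_s(M)=x^{a+1}$, its $(k-1)$-th power is $x^{(a+1)(k-1)}$, which is nonzero in $\F_2[x]/(x^{n+1})$ precisely when $(a+1)(k-1)\le n=2^{\ell}-2-a$; rearranging gives $k(a+1)\le 2^{\ell}-1$, exactly the standing hypothesis. Invoking Theorem~\ref{th : main 1} then produces the local $k$-multiplicity. There is no real obstacle: the corollary is an essentially mechanical substitution once $\bar w(\RP^n)$ has been simplified to $(1+x)^{a+1}$, the only subtlety being to match the three numerical conditions ($a+1\le n$, $s\ge a+1$, and $(a+1)(k-1)\le n$) against the single inequality in the statement.
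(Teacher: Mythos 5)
Your proposal is correct and follows essentially the same route as the paper's own proof: compute $\bar w(\RP^{2^{\ell}-2-a})=(1+x)^{a+1}$, check that $s=a+1$ equals $\dim N-\dim M+1$, verify $\bar w_{a+1}^{\,k-1}=x^{(a+1)(k-1)}\neq 0$ from the hypothesis $k(a+1)\le 2^{\ell}-1$, and apply Theorem~\ref{th : main 1}. All the numerical checks (in particular $a+1\le\dim M$) are carried out correctly, so there is nothing to add.
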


\noindent
For $k=2$ this corollary recovers, and slightly extends, the result of Milnor \cite{Milnor1957} from 1957 and implies that there is no immersion $\RP^{2^{\ell-1}}\longrightarrow \R^{2^{\ell}-2}$, or more precisely that there cannot exist a continuous map $\RP^{2^{\ell-1}}\longrightarrow \R^{2^{\ell}-2}$ that does not admit a local $2$-multiplicity.

\medskip
The next corollary is a consequence of Theorem~\ref{th : main 0} rather then Theorem~\ref{th : main 1} even the proof is almost identical to the proof of Corollary~\ref{cor : main 2}.

\begin{corollary}
	\label{cor : main 2.5}
	Let $a\geq 1$ and $\ell\geq1$ be integers, let $k\geq 2$ be a power of $2$, and let $k(a+1)\leq 2^{\ell}-1$.
	Then any continuous map  
	\[
	\RP^{2^{\ell}-2-a}\longrightarrow S^{2^{\ell}-2}
	\]
	admits a local $k$-multiplicity.
\end{corollary}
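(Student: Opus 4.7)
The plan is to apply Theorem \ref{th : main 0} directly to a continuous map $f\colon M=\RP^{2^\ell-2-a}\longrightarrow N=S^{2^\ell-2}$; Theorem \ref{th : main 1} is not available verbatim because $S^{2^\ell-2}$ is in general not parallelizable. The key preliminary observation is that $\tau S^n$ is stably trivial (its normal bundle in $\R^{n+1}$ has rank one and is trivial), so $f^*\tau S^n$ is stably trivial and $w(f^*\tau N)=1$ in $H^*(\RP^m;\F_2)$. Therefore
\[
w_i:=w_i(f^*\tau N\oplus(-\tau M))=\bar w_i(\RP^m)\qquad(i\ge 0),
\]
and the entire problem reduces to a computation inside $H^*(\RP^m;\F_2)$.

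Next I would compute $\bar w(\RP^m)$ using $w(\RP^m)=(1+x)^{m+1}$, where $x$ generates $H^1(\RP^m;\F_2)$. Because $m+1=2^\ell-1-a\le 2^\ell$ forces $x^{2^\ell}=0$, the Frobenius identity $(1+x)^{2^\ell}=1+x^{2^\ell}$ collapses to $(1+x)^{2^\ell}=1$ in the truncated polynomial ring $H^*(\RP^m;\F_2)=\F_2[x]/(x^{m+1})$. Hence
\[
\bar w(\RP^m)=(1+x)^{-(m+1)}=(1+x)^{2^\ell-(m+1)}=(1+x)^{a+1},
\]
so that $w_i=\binom{a+1}{i}x^i$ for every $i\ge 0$.

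The heart of the proof is to take $s=0$ in Theorem \ref{th : main 0} and to evaluate the determinant $u_0=\det(w_{a+1-i+j})_{1\le i,j\le k-1}$ (note that $\dim N-\dim M+1=a+1$). For every pair $i<j$ the subscript $a+1-i+j$ exceeds $a+1$, killing the binomial coefficient $\binom{a+1}{a+1-i+j}$ and hence the entry. The matrix is therefore lower triangular with diagonal $w_{a+1}=x^{a+1}$, giving $u_0=x^{(k-1)(a+1)}$. The hypothesis $k(a+1)\le 2^\ell-1$ rearranges to $(k-1)(a+1)\le 2^\ell-2-a=m$, so $x^{(k-1)(a+1)}\ne 0$ in $H^*(\RP^m;\F_2)$, and Theorem \ref{th : main 0} then supplies the desired local $k$-multiplicity. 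No step is a genuine obstacle: the whole argument parallels the proof of Corollary \ref{cor : main 2}, with the stable triviality of $\tau S^n$ playing the role of the parallelizability hypothesis used in Theorem \ref{th : main 1}.
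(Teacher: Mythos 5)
Your proposal is correct and follows essentially the same route as the paper: apply Theorem \ref{th : main 0} with $s=0$ (the paper's $r=2m-1$), use $w(f^*\tau S^n)=1$ to reduce to $\bar w(\RP^m)=(1+x)^{a+1}$, and observe that the determinant is triangular with value $x^{(a+1)(k-1)}\neq 0$ since $(k-1)(a+1)\leq m$. The only cosmetic difference is that you justify $w(f^*\tau S^n)=1$ via stable triviality of $\tau S^n$, which is exactly the paper's implicit reasoning.
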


The next consequence is obtained via direct application of Theorem~\ref{th : main 1}.

\begin{corollary}
	\label{cor : main 3}
	Let $a\geq 1$ and $\ell\geq1$ be integers, let $k\geq 2$ be a power of $2$, and let $k(a-1)\leq 2^{\ell}-1$.
	Then any continuous map  
	\[
	\CP^{2^{\ell}-a}\longrightarrow \R^{2^{\ell+1}-3}
	\]
	admits a local $k$-multiplicity.
\end{corollary}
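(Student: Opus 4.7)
The plan is to apply Theorem \ref{th : main 1} directly to $M=\CP^{2^\ell-a}$ and $N=\R^{2^{\ell+1}-3}$; since $\R^{2^{\ell+1}-3}$ is Euclidean it is trivially parallelizable, so the work reduces to identifying $s:=\max\{j:\bar{w}_j(M)\neq 0\}$ and verifying the two numerical conditions of that theorem.

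I would begin by computing the dual Stiefel--Whitney class of $M$. Setting $n:=2^\ell-a$ and letting $x\in H^2(\CP^n;\F_2)$ be the generator, the standard identity $w(\tau\CP^n)=(1+x)^{n+1}$ combined with the Frobenius identity $(1+x)^{2^\ell}=1+x^{2^\ell}$ and the truncation $x^{2^\ell}=0$ in $H^*(\CP^n;\F_2)$ (valid since $2^\ell\geq n+1$) yields
\[
\bar{w}(\tau\CP^n)=(1+x)^{-(n+1)}=(1+x)^{2^\ell-(n+1)}=(1+x)^{a-1}.
\]
The hypothesis $k(a-1)\leq 2^\ell-1$ together with $k\geq 2$ forces $a-1\leq n$, so the top term $x^{a-1}$ is nonzero in $H^{2(a-1)}(\CP^n;\F_2)$; hence $s=2(a-1)$ and $\bar{w}_s(M)=x^{a-1}$.

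It then remains to verify the two numerical conditions. An immediate dimension count gives $\dim N-\dim M+1=(2^{\ell+1}-3)-(2^{\ell+1}-2a)+1=2(a-1)$, so $s\geq \dim N-\dim M+1$ actually holds with equality. For the second condition, $\bar{w}_s(M)^{k-1}=x^{(k-1)(a-1)}$, and the rearrangement $(k-1)(a-1)=k(a-1)-(a-1)\leq (2^\ell-1)-(a-1)=2^\ell-a=n$ of the hypothesis shows that this class is nonzero in $H^*(\CP^n;\F_2)$. An application of Theorem \ref{th : main 1} then yields the conclusion.

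I do not expect a genuine obstacle, as the whole argument reduces to a brief cohomological computation. The only delicate point is calibrating the Frobenius identity $(1+x)^{2^\ell}=1$ against the truncation $x^{n+1}=0$; this is exactly the combinatorics that dictates the hypothesis $k(a-1)\leq 2^\ell-1$ and the somewhat unusual target dimension $2^{\ell+1}-3$ in the statement.
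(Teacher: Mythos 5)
Your proposal is correct and follows essentially the same route as the paper: apply Theorem~\ref{th : main 1} to $M=\CP^{2^{\ell}-a}$ and $N=\R^{2^{\ell+1}-3}$ after computing $\bar{w}(\CP^{2^{\ell}-a})=(1+x)^{a-1}$, identifying $s=2(a-1)=\dim N-\dim M+1$, and noting $\bar{w}_s^{\,k-1}=x^{(k-1)(a-1)}\neq 0$ since $(k-1)(a-1)\leq 2^{\ell}-a$. The paper's verification is the same cohomological computation, only phrased in terms of $r=\dim M-s$ rather than $s$ itself.
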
 

\medskip
Using the knowledge on Chern classes of tangent bundles of the complex projective spaces we get the following corollary of Theorem \ref{th : main 1-C}.

\begin{corollary}
	\label{cor : main 4}
	Let $a\geq 1$ and $\ell\geq1$ be integers, let $k$ be an odd prime, and let $2\leq a\leq\tfrac{k^{\ell}+1}{2}$.
	If $k(a-1)\leq k^{\ell}-1$ then any continuous map  
	\[
	\CP^{k^{\ell}-a}\longrightarrow \C^{k^{\ell}-2}
	\]
	admits a local $k$-multiplicity.
\end{corollary}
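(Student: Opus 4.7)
The plan is to apply Theorem~\ref{th : main 1-C} with $M=\CP^{k^{\ell}-a}$ and $N=\C^{k^{\ell}-2}$. The target $N$ is parallelizable as a complex Euclidean space, and reading the dimensions as complex, the gap satisfies $\dim N - \dim M + 1 = (k^{\ell}-2)-(k^{\ell}-a)+1=a-1$. So the task reduces to computing the total dual Chern class of $\tau\CP^{k^{\ell}-a}$ modulo $k$, identifying the top nonzero index $s$, and verifying $\bar{c}_{s}(M)^{k-1}\neq 0$.

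For the computation I would start from the standard identity $c(\tau\CP^{n})=(1+x)^{n+1}$, where $x$ denotes the hyperplane class, so that
\[
\bar{c}(\tau M)=(1+x)^{-(k^{\ell}-a+1)}=(1+x)^{a-1}\cdot (1+x)^{-k^{\ell}}.
\]
The key step is the mod $k$ simplification: the Frobenius congruence $(1+x)^{k^{\ell}}\equiv 1+x^{k^{\ell}}\pmod{k}$, combined with the vanishing $x^{k^{\ell}}=0$ in $H^{*}(\CP^{k^{\ell}-a};\F_{k})$ that follows from $a\geq 1$, collapses the second factor to $1$ and leaves
\[
\bar{c}(\tau M)\equiv (1+x)^{a-1}\pmod{k}.
\]
Reading off coefficients then gives $\bar{c}_{j}(M)=\binom{a-1}{j}x^{j}$.

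From here the two hypotheses of Theorem~\ref{th : main 1-C} fall out of straightforward arithmetic. The largest surviving index is $s=a-1$, provided $x^{a-1}\neq 0$ in $H^{*}(M;\F_{k})$, i.e.\ $a-1\leq k^{\ell}-a$; this is exactly the assumption $a\leq\tfrac{k^{\ell}+1}{2}$, and it yields $s=\dim N-\dim M+1=a-1$ with equality. Finally $\bar{c}_{s}(M)^{k-1}=x^{(a-1)(k-1)}$ is nonzero in the cohomology of $M$ precisely when $(a-1)(k-1)\leq k^{\ell}-a$, which rearranges to the remaining hypothesis $k(a-1)\leq k^{\ell}-1$.

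The only non-routine point is the Frobenius simplification of $(1+x)^{-k^{\ell}}$ in the cohomology of $M$; without it the expression for $\bar{c}(\tau M)$ is unwieldy and the two numerical hypotheses of the corollary would not line up cleanly with Theorem~\ref{th : main 1-C}. Once that identification is in place, the bound $a\leq\tfrac{k^{\ell}+1}{2}$ controls the location of $s$ and the bound $k(a-1)\leq k^{\ell}-1$ controls the $(k-1)$-st power, and the conclusion is immediate.
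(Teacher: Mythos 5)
Your proposal is correct and follows essentially the same route as the paper: both apply Theorem~\ref{th : main 1-C} after computing $\bar{c}(\CP^{k^{\ell}-a})\equiv(1+x)^{a-1}\pmod k$ (the paper states this reduction directly, relying implicitly on the same congruence $(1+x)^{k^{\ell}}\equiv 1+x^{k^{\ell}}$ and $x^{k^{\ell}}=0$ that you spell out), then identify $s=a-1$ via $a\leq\tfrac{k^{\ell}+1}{2}$ and verify $\bar{c}_{a-1}^{\,k-1}=x^{(a-1)(k-1)}\neq 0$ from $k(a-1)\leq k^{\ell}-1$. No gaps.
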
 

\medskip
The previous results motivates many natural question.
We state the first and most obvious one that, in the case of $k=2$, extends the well known problem of the existence of an immersion between smooth manifolds.

\begin{question}
	Let $M$ be a smooth compact manifold of dimension $m$, and let $k\geq 2$ be an integer.
	What is the minimal dimension $n$ such that there exists a continuous map $f\colon M\longrightarrow \R^n$ such that $f$ does not admit a local $k$-multiplicity?
\end{question}

\noindent
It is our belief that in the process of answering this or similar questions about local multiplicities many new fascinating ideas will come to life as was the case when the immersion conjecture was studied by Brown and Peterson \cite{Brown1977} and resolved by Ralph Cohen \cite{CohenR1985} during the 1970s and 1980s.

\subsubsection*{Acknowledgements.}
We are grateful to Peter Landweber for, as always, very useful and precise comments and suggestions.
\subsection{Geometric applications}
The concept of $k$-multiplicity, as also the concept of local $k$-multiplicity, are natural topological properties of a continuous map.
After studying these properties from a topological point of view an immediate question arises: 
{\em Can the existence of $k$-multiplicity or local $k$-multiplicity of a continuous map be used in solving problems outside the obvious realm of topology?}
In the following, using Corollary~\ref{cor : main 2.5}, we demonstrate how to obtain lower bounds for several questions in convex geometry.

\medskip
In 1963 Branko~Gr\"{u}nbaum \cite[Sec.\,6.5]{Grunbaum1963} posed many interesting problems.
We consider the following two problems.

Let $K\subset \R^{m}$ be a convex body, that is a convex compact subset of $\R^{m}$ with non-empty interior.
An \emph{affine diameter} of a convex body $K$ {\em in direction} $\ell\in\RP^{m-1}$ is any affine line $\ell+v$ for $v\in\R^{m}$ with the property that
\[
\length (K\cap(\ell+v))=\max \{ \length (K\cap(\ell+u)) : y\in\R^{m}\}.
\]
Here $\length$ denotes the length of an interval.
In general an affine diameter in direction $\ell\in\RP^{m-1}$ is not unique.
For example, if $K$ is a square in $\R^2$, then in any direction parallel to one of the edges there are infinitely many affine dimeters.
On the other hand, if $K$ is a strictly convex body then in any direction there is a unique affine diameter.

\begin{figure}
\centering
\includegraphics[scale=0.7]{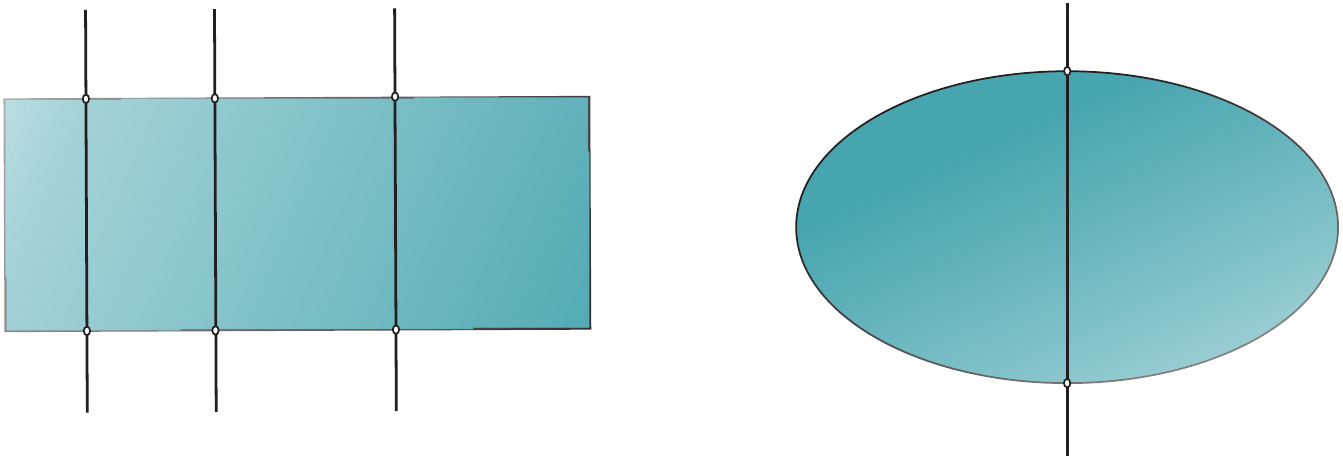}
\caption{\small Affine diameters in a given direction.}
\end{figure}

The first question of Gr\"{u}nbaum we consider asks for the number of affine diameters of a convex body intersecting in a single point.

\begin{question}
\label{aff-diam-mult-prob}
Let $K\subset \R^{m}$ be a convex body. 
Is there at least $m+1$ pairwise distinct affine diameters $\ell_1+v_1,\ldots, \ell_{m+1}+v_{m+1}$ of $K$ such that
\[
\emptyset\neq (\ell_1+v_1)\cap\cdots\cap(\ell_{m+1}+v_{m+1})\cap \interior K?
\]	
\end{question}

\noindent
This problem was studied intensively by many authors that employed diverse methods in addressing this question, see for example the work of B\'ar\'any et al., \cite{Barany1990}, \cite{Barany2016}, and for survey of known result \cite{Soltan2005}.
Here we relate the number of pairwise distinct affine diameters intersecting in a single point inside a strictly convex body with a multiplicity of a continuous map $\RP^m\longrightarrow S^m$.

\begin{theorem}
\label{aff-diam-thm}
	Let $K\subset \R^{m}$ be a strictly convex body. 
	There exists a continuous map $f_K\colon \RP^m\longrightarrow S^m$ with the property that if $f_K$ admits a $k$-multiplicity, then there exist $k$ pairwise distinct affine diameters $\ell_1+v_1,\ldots, \ell_{k}+v_{k}$ of $K$ such that
\[
\emptyset\neq (\ell_1+v_1)\cap\cdots\cap(\ell_{k}+v_{k})\cap \interior K.
\]	
\end{theorem}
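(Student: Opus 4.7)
The plan is to exhibit $f_K$ by using the standard decomposition $\RP^m \cong D(\gamma_{m-1})\cup_{S^{m-1}} D^m$, where $\gamma_{m-1}$ is the tautological line bundle over $\RP^{m-1}$, and to identify the $D(\gamma_{m-1})$-summand with the natural affine-diameter space of $K$. Consider
\[
X := \{(\ell, p) : \ell \in \RP^{m-1},\ p \in D_\ell\},
\]
where by strict convexity of $K$ the affine diameter $D_\ell$ in direction $\ell$ is unique. Writing $a_\ell, b_\ell$ for its endpoints and $c_\ell := (a_\ell+b_\ell)/2$ for its midpoint, a choice of lift $u\in S^{m-1}$ of $\ell$ gives a coordinate $t$ on $D_\ell$ via $p = c_\ell+tu$, and $u\mapsto -u$ sends $t\mapsto -t$. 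After fiberwise rescaling, this identifies $X$ with $(S^{m-1}\times[-1,1])/\Z_2$ under the antipodal-diagonal action, i.e., with the disk bundle $D(\gamma_{m-1})$; in particular $\partial X = S(\gamma_{m-1})\cong S^{m-1}$ sits over $\RP^{m-1}$ as the canonical double cover.

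Next, the projection $\phi\colon X \to K$, $(\ell, p)\mapsto p$, satisfies $\phi^{-1}(q) = \{(\ell, q) : q \in D_\ell\}$; for $q \in \interior K$ this is in bijection with the affine diameters of $K$ through $q$. On the boundary, the classical parallel-tangent-hyperplane characterization of affine diameters shows that for a smooth strictly convex body each $q \in \partial K$ is the endpoint of exactly one affine diameter, so $\phi|_{\partial X}\colon \partial X \to \partial K$ is a continuous bijection between compact Hausdorff spaces, hence a homeomorphism; for strictly convex bodies with corners one approximates by smooth ones.

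The complement $S^m\setminus\interior K$ (with $S^m = \R^m \cup\{\infty\}$) is a topological $m$-disk by Schoenflies applied to $\partial K \subset \R^m$. Identifying $\partial X = \partial D^m = S^{m-1}$, the boundary homeomorphism $\phi|_{\partial X}\colon \partial D^m \to \partial K$ extends to a homeomorphism $\psi\colon D^m \to S^m \setminus \interior K$: fix any homeomorphism $\psi_0\colon D^m \to S^m \setminus \interior K$, take the cone extension over $D^m$ of the self-homeomorphism $\psi_0^{-1}\circ \phi|_{\partial X}$ of $S^{m-1}$, and compose with $\psi_0$. Define $f_K\colon \RP^m \to S^m$ by $f_K = \phi$ on the $D(\gamma_{m-1})$-summand and $f_K = \psi$ on the complementary $D^m$; the two definitions agree on the common boundary sphere, so $f_K$ is continuous.

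Finally, since $\psi$ is a homeomorphism onto $S^m \setminus \interior K$, any $q \in S^m \setminus \interior K$ has a unique preimage under $f_K$, while for $q \in \interior K$ the preimages are parametrized by the affine diameters through $q$. Hence a $k$-multiplicity $f_K(x_1) = \cdots = f_K(x_k)$ can only occur at some $q \in \interior K$ and produces $k$ pairwise distinct directions $\ell_1, \dots, \ell_k$ whose affine diameters meet at $q$, which is the desired conclusion. The hardest step is the boundary analysis: the identification $X \cong D(\gamma_{m-1})$ together with the homeomorphism $\phi|_{\partial X}\colon \partial X \to \partial K$ via the parallel-tangent characterization of affine-diameter endpoints; the disk extension and multiplicity bookkeeping are then essentially formal.
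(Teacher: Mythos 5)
Your construction is essentially correct for \emph{smooth} strictly convex bodies, and up to the boundary step it is close in spirit to the paper's proof (both identify the space of affine diameters with a disc bundle of the tautological line bundle over $\RP^{m-1}$, recognize $\RP^m$, and force multiplicities into $\interior K$). The genuine gap is your boundary analysis for general strictly convex $K$: strict convexity does not exclude corners, and at a corner $q\in\partial K$ the normal cone is more than a single ray. Each unit normal $n$ in that cone pairs $q$ with the unique point of $\partial K$ supported by the hyperplane with outer normal $-n$, and by the parallel-supporting-hyperplane characterization you invoke, every such chord is an affine diameter ending at $q$; already for a corner of the intersection of two round discs of different radii one gets at least two distinct affine diameters with endpoint $q$. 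Hence $\phi|_{\partial X}\colon\partial X\to\partial K$ is surjective but not injective, it is not a homeomorphism, and the extension $\psi$ over the complementary disc cannot be built as described. The parenthetical ``approximate by smooth ones'' does not repair this: the theorem asks for a map $f_K$ attached to the given body $K$ whose $k$-multiplicities produce affine diameters of $K$ itself, and a limiting argument along smooth approximations $K_j\to K$ neither yields such a map nor obviously preserves the pairwise distinctness of the $k$ diameters or the interiority of their common point in the limit.

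The paper sidesteps the boundary altogether: it parametrizes only the open pieces of the diameters, i.e.\ the points $s_K(\ell)+u\in\interior K$, so the relevant space $X$ is the \emph{open} disc bundle of $\gamma^1_m$ over $\RP^{m-1}$; the evaluation map $X\to\interior K$ is proper, and passing to one-point compactifications gives $f_K\colon\widehat X\to\widehat{\interior K}$, where $\widehat X$ is the Thom space of $\gamma^1_m$, hence $\RP^m$, and $\widehat{\interior K}\approx S^m$. The added point has a single preimage, so any $k$-multiplicity lands over $\interior K$ and the bookkeeping is the same as yours, with no smoothness hypothesis, no Gauss map, and no Schoenflies or Alexander-cone extension. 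If you wish to keep your closed-bundle picture, the correct fix is to collapse $\partial X$ to a point rather than to map it homeomorphically onto $\partial K$ --- which is exactly the paper's Thom-space construction.
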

\begin{proof}
The space of all affine lines in $\R^{m}$ can be identified with the total space $E(\gamma^{m-1}_m)$ of the tautological bundle $\gamma^{m-1}_m$ over the Grassmann manifold $G_{m-1}(\R^m)$ of all $(m-1)$-dimensional vector subspaces of $\R^{m}$.
Indeed, let $w\in S^{m-1}$ be a unit vector and let $\ell_{w}=\spann\{w\}\in\RP^{m-1}$ be the corresponding $1$-dimensional vector subspace of $\R^{m}$.  
Let $v\in \R^m$. 
Then the correspondence between affine lines and points in $E(\gamma^{m-1}_m)$ is given by 
\[
\ell + v \longleftrightarrow ( \ell^{\perp},\,v- (w_{\ell}\cdot v)\, w_{\ell}).
\]
Here ``$\cdot$'' denotes the standard inner product in $\R^{m}$, while $\ell^{\perp}$ the orthogonal complement of $\ell$.
Furthermore, recall that the projective space $\RP^{m-1}$ can be identified with the Grassmann manifold $G_{m-1}(\R^m)$ via the homeomorphism
\begin{equation}
	\label{eq : identification}
	\RP^{m-1}\ni\ell\longleftrightarrow\ell^{\perp}\in G_{m-1}(\R^m).
\end{equation}

The convex body $K$ is strictly convex and therefore for every $\ell\in \RP^{m-1}$ there exists a unique affine diameter in direction $\ell$.
The choice of an affine diameter of the strictly convex body $K$ defines a continuous map $s_K\colon\RP^{m-1}\longrightarrow E(\gamma^{m-1}_m)$ from the space of all directions  to the space of all affine lines in $\R^{m}$.
If the projective space $\RP^{m-1}$ is identified with the Grassmann manifold $G_{m-1}(\R^m)$ via \eqref{eq : identification}, then the function $s_K$ becomes a section of the tautological bundle $\gamma^{m-1}_m$.

Next we consider the vector bundle $\gamma_m^{m-1}\oplus (\gamma_m^{m-1})^{\perp}$ over $G_{m-1}(\R^m)$, i.e., over $\RP^{m-1}$  via the identification \eqref{eq : identification}.
The subspace of the total space 
\begin{multline*}
X:=\{ (\ell,s_K(\ell)\oplus u) \in E(\gamma_m^{m-1}\oplus (\gamma_m^{m-1})^{\perp}) : 
\\
\ell\in\RP^{m-1}, \, s_K(\ell)\in \ell^{\perp}, \, u\in \ell , \, u+s_K(\ell)\in\interior(K) \}	
\end{multline*}
is homeomorphic to the total space of a disc bundle of the vector bundle $\gamma_m^1$ over $\RP^{m-1}$.

Let $g_K\colon X \longrightarrow \interior (K)$ be a continuous map defined by 
\[
(\ell,s_K(\ell)\oplus u) \longmapsto s_K(\ell)+ u.
\]
The map $g_K$ is a proper map and thus it induces a continuous map between one point compactifications $f_K\colon\widehat{X}\longrightarrow \widehat{\interior (K)}$.
Since $\interior (K)$ is homeomorphic to an $m$-dimensional disc, $\widehat{\interior (K)}\approx S^m$.
The one point compactification $\widehat{X}$ is homeomorphic to the Thom space of the vector bundle $\gamma_m^1$, and therefore it is homeomorphic to $\RP^m$, see \cite[Prop.\,p.68]{Stong1968}. 
The map $f_K$ has the desired property: If there exist $k$ pairwise distinct points $x_1,\ldots,x_k\in\widehat{X}\approx\RP^m$ such that $f_K(x_1)=\cdots=f_K(x_k)$ then 
\[
x_1=(\ell_1,s_K(\ell_1)\oplus u_1)\in X, \ \ldots \ , x_k=(\ell_k,s_K(\ell_k)\oplus u_k)\in X,
\]
and
\[
s_K(\ell_1)+ u_1=\cdots=s_K(\ell_k)+ u_k\in (\ell_1+s_K(\ell_1))\cap\cdots\cap(\ell_{k}+s_K(\ell_k))\cap \interior K.
\] 
This concludes the proof of the theorem.
\end{proof}

\noindent
In the case when $m=2^{\ell}-2$ for $\ell\geq 2$ from Corollary \ref{cor : main 2.5} we get that: For every strictly convex body $K\subset \R^{m}$ there exist at least $k=2^{\ell-1}$ pairwise distinct affine diameters $\ell_1+v_1,\ldots, \ell_{k}+v_{k}$ of $K$ such that
\[
\emptyset\neq (\ell_1+v_1)\cap\cdots\cap(\ell_{k}+v_{k})\cap \interior K.
\]	
 
\medskip
The second question indicated by Gr\"{u}nbaum in the same publication \cite{Grunbaum1963} is the following one. 
Let $K\subset \R^{m}$ be a convex body. For every affine hyperplane $H$ let, for example, 
\begin{compactitem}[\ \ $\circ$]
	\item $m(H)$ denote the center of mass of the convex body $K\cap H$ in $H$, and let
	\item $s(H)$ denote the Steiner point of $K\cap H$ in $H$, that is the center of mass of the Gaussian curvature of $\partial K\cap H$.
\end{compactitem}
whenever $H\cap K$ is non-empty.

\begin{figure}
\centering
\includegraphics[scale=1.2]{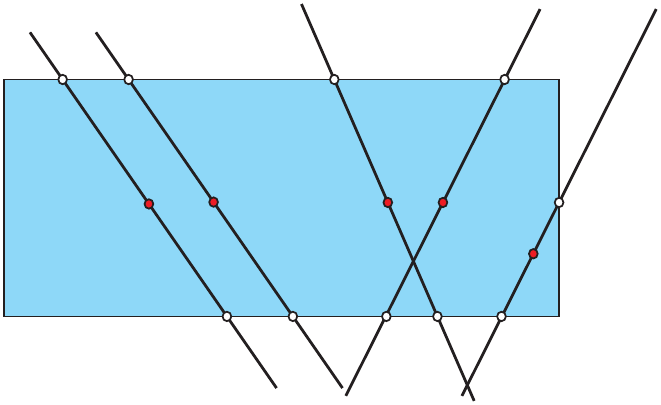}
\caption{\small Selection of mass centers of hyperplane sections.}
\end{figure}

\begin{question}
What is the maximal number $k$ such that, for any strictly convex $K$, there exist $k$ pairwise distinct affine hyperplanes $H_1,\ldots, H_k$ whose centers of mass, or Steiner points coincide, that is
\[
m(H_1)=\cdots=m(H_k),
\quad\qquad\text{or}\qquad\quad
s(H_1)=\cdots=s(H_k).
\]
\end{question}

\noindent
Relationship of a $k$-multiplicity and a coincidence of centers of mass or Steiner points is explained in the following theorem.

\begin{theorem}
	Let $K\subset \R^{m}$ be a convex body. There exists a continuous map $f_K\colon \RP^m\longrightarrow S^m$ such that if $f_K$ admits a $k$-multiplicity, then there exist $k$ pairwise distinct affine hyperplanes $H_1,\ldots, H_k$ such that
\[
m(H_1)=\cdots=m(H_k)\qquad\text{or}\qquad s(H_1)=\cdots=s(H_k).
\]	
\end{theorem}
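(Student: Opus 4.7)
The plan is to repeat the construction from the proof of Theorem~\ref{aff-diam-thm}, with affine lines replaced by affine hyperplanes. The space of affine hyperplanes in $\R^{m}$ is in bijection with the total space $E(\gamma^{1}_{m})$ of the tautological line bundle $\gamma^{1}_{m}$ over $G_{1}(\R^{m})\cong\RP^{m-1}$ via the correspondence $(\ell,v)\longleftrightarrow v+\ell^{\perp}$, where $\ell\in\RP^{m-1}$ and $v\in\ell$. This map is injective, since $v_{1}+\ell_{1}^{\perp}=v_{2}+\ell_{2}^{\perp}$ forces $\ell_{1}=\ell_{2}$ and then $v_{1}-v_{2}\in\ell_{1}\cap\ell_{1}^{\perp}=\{0\}$.

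Next I restrict to the open subspace $X\subseteq E(\gamma^{1}_{m})$ consisting of those $(\ell,v)$ whose hyperplane $v+\ell^{\perp}$ meets $\interior K$. Because $K$ is a bounded convex body, for each $\ell\in\RP^{m-1}$ the set of admissible displacements $v\in\ell$ is an open bounded interval, so $X$ is the total space of an open disc bundle of the line bundle $\gamma^{1}_{m}$. I define continuous maps $g^{m}_{K},g^{s}_{K}\colon X\longrightarrow\interior K$ by $(\ell,v)\mapsto m(v+\ell^{\perp})$ and $(\ell,v)\mapsto s(v+\ell^{\perp})$. As $(\ell,v)$ tends to $\partial X$, the hyperplane $v+\ell^{\perp}$ approaches a supporting hyperplane of $K$, the section $K\cap(v+\ell^{\perp})$ collapses into $\partial K$, and the selected point escapes every compact subset of $\interior K$; hence both maps are proper and extend to continuous maps $f^{m}_{K},f^{s}_{K}\colon\widehat{X}\longrightarrow\widehat{\interior K}\cong S^{m}$. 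As argued in the proof of Theorem~\ref{aff-diam-thm}, the compactification $\widehat{X}$ is the Thom space $T(\gamma^{1}_{m})$, which is homeomorphic to $\RP^{m}$.

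Taking $f_{K}$ to be $f^{m}_{K}$ or $f^{s}_{K}$ yields the two alternatives in the statement. If $f_{K}$ admits a $k$-multiplicity at some $y\in S^{m}$, then $y\neq\infty$, because $g^{\bullet}_{K}$ takes values in $\interior K$, so $f_{K}^{-1}(\infty)=\{\infty\}$; thus all $k$ distinct preimages lie in $X$, and the injectivity of the hyperplane parametrisation then produces the desired $k$ pairwise distinct hyperplanes whose centers of mass or Steiner points coincide.

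The principal obstacle I expect is the verification of properness of $g^{m}_{K}$ and, more delicately, of $g^{s}_{K}$ at $\partial X$ when $K$ is not assumed strictly convex or smooth. For the center of mass this follows from continuity in the Hausdorff metric on convex sections combined with Lebesgue's dominated convergence theorem; for the Steiner point the required limiting statement is the classical continuity of the Steiner map on the space of convex bodies with the Hausdorff topology.
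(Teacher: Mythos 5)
Your proposal follows essentially the same route as the paper: identify affine hyperplanes with $E(\gamma^1_m)$, restrict to the open interval subbundle of hyperplanes meeting $\interior K$, pass to one-point compactifications to get a map $\RP^m\to S^m$ (via the Thom space of $\gamma^1_m$), and read off the coinciding centers from a $k$-multiplicity. Your extra care about properness of the selection maps (center of mass and Steiner point) is a point the paper passes over silently, so the proposal is correct and, if anything, slightly more detailed than the original argument.
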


\begin{proof}
The space of all affine hyperplanes in $\mathbb R^m$ can be identified with the total space $E(\gamma_m^1)$ of the tautological bundle $\gamma_m^1$ over $\RP^{m-1}$.
Indeed, the affine hyperplane $H$ corresponds to the point $(\ell,\ell\cap H)\in E(\gamma_m^1)$ where $\ell$ is the line through the origin perpendicular to $H$.
Let us consider the space
\[
Y := \{H \in E(\gamma_m^1) : H\cap\interior K \neq \emptyset\}.
\]
Similarly to the proof of Theorem~\ref{aff-diam-thm}, this is an open segment subbundle of $E(\gamma_m^1)$ and it can be identified with the whole $E(\gamma_m^1)$. The assignment of $m(H)$ to every $H\in Y$ determines a continuous proper map 
\[
g_K : Y \longrightarrow \interior K,
\]
which has an extension to the one-point compactifications to give
\[
f_K : \RP^m \longrightarrow S^m.
\]
The $k$-multiplicity of $f_K$ gives $k$ pairwise distinct hyperplanes with coinciding centers.
\end{proof}

\noindent
For example, Corollary \ref{cor : main 2.5} implies that for $m=2^\ell-2$ and a convex body $K\in\R^m$ there exist $k=2^{\ell-1}$ pairwise distinct affine hyperplanes whose centers of mass, or Steiner points, coincide.

\medskip
By restricting to hyperplanes passing through the origin the above argument works well to prove:

\begin{theorem}
	Let $K\subset \R^{m}$ be a convex body such that $0\in\interior K$. There exists a continuous map $f_K\colon \RP^{m-1}\longrightarrow \R^m$ such that if $f_K$ admits a $k$-multiplicity, then there exist $k$ pairwise distinct linear hyperplanes $H_1,\ldots, H_k$ such that
\[
m(H_1)=\cdots=m(H_k),
\]	
or the same with $s(H_i)$.
\end{theorem}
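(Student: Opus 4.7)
The plan is to adapt the proof of the preceding theorem, noting that when we restrict to linear hyperplanes passing through the origin no one-point compactification is necessary: the parameter space is already the compact manifold $\RP^{m-1}$. Specifically, I would identify the space of linear hyperplanes in $\R^m$ with $\RP^{m-1}$ by associating a line $\ell$ through the origin with its orthogonal complement $\ell^\perp$. Since $0\in\interior K$, the intersection $K\cap \ell^\perp$ is a convex body in $\ell^\perp$ of positive $(m-1)$-dimensional volume containing the origin in its interior for every $\ell$, so the center of mass and the Steiner point are defined without exception. I would then set
\[
f_K\colon\RP^{m-1}\longrightarrow \R^m,\qquad \ell\longmapsto m(K\cap \ell^\perp),
\]
where $m(K\cap \ell^\perp)\in \ell^\perp\subset\R^m$ is the center of mass; the same formula works with $s(K\cap \ell^\perp)$ in place of $m(K\cap \ell^\perp)$. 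The map is well-defined on $\RP^{m-1}$ because $\ell^\perp$ depends only on the line $\ell$, not on a choice of unit vector.

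The main point to verify is the continuity of $f_K$. As $\ell$ varies in $\RP^{m-1}$, the body $K\cap \ell^\perp$ contains a common small ball around $0$ (because $0\in\interior K$) and lies inside a common large ball (because $K$ is compact), so the assignment $\ell\mapsto K\cap \ell^\perp$ is continuous into the space of convex bodies in $\R^m$ equipped with the Hausdorff metric. Continuity of the center of mass and of the Steiner point with respect to the Hausdorff metric on full-dimensional convex bodies is a standard fact, and together with the previous observation it yields the continuity of $f_K$. This is the only delicate step; the non-degeneracy guaranteed by $0\in\interior K$ is exactly what prevents the mass integrals from collapsing and what keeps the Steiner point from being undefined.

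Finally, if $f_K$ admits a $k$-multiplicity, there exist pairwise distinct $\ell_1,\ldots,\ell_k\in\RP^{m-1}$ with $f_K(\ell_1)=\cdots=f_K(\ell_k)$. Setting $H_i:=\ell_i^\perp$ gives $k$ pairwise distinct linear hyperplanes satisfying
\[
m(H_1)=\cdots=m(H_k),
\]
or the analogous identity for $s(H_i)$, which is the desired conclusion.
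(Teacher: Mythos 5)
Your proposal is correct and follows essentially the same route as the paper: the paper's proof is exactly the restriction of the preceding affine-hyperplane construction to the zero section (linear hyperplanes through the origin), where the hypothesis $0\in\interior K$ guarantees every section $K\cap\ell^{\perp}$ is a full-dimensional convex body in $\ell^{\perp}$, so the map $\ell\mapsto m(K\cap\ell^{\perp})$ (or the Steiner point) is defined and continuous on the already compact $\RP^{m-1}$ and no one-point compactification is needed. Your continuity argument via the Hausdorff metric supplies the detail the paper leaves implicit, and the final step translating a $k$-multiplicity into $k$ distinct hyperplanes with coinciding centers is the same.
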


\noindent
An example of an explicit bound follows from Corollary \ref{cor : main 2.5} for  $m=2^\ell-2$ and   $k=2^{\ell-2}$. Then for any convex body $K\subset \R^{m}$ there exist $k$ pairwise distinct linear hyperplanes $H_1,\ldots, H_k$ whose centers of mass coincide.

\section{Multiplicity of fiberwise maps between vector bundles}

In this section we introduce and study the notion of $k$-multiplicity of a fiberwise map between vector bundles.
Then we derive a criterion, which guarantees that, for an integer $k\geq 2$ and any two vector bundles over the same base space, any continuous fiberwise map between them admits a $k$-multiplicity.

\medskip
Let $\xi$ be a vector bundle over $X$. 
Then $E(\xi)$ denotes the total space of $\xi$, $p_{\xi}\colon E(\xi)\longrightarrow X$ denotes the corresponding projection map, and $F_x(\xi)$ stands for the fiber of $\xi$ over the point $x\in X$.

\medskip
Let $X$ be a topological space with the homotopy type of a finite CW-complex, and let $k\geq 2$ be an integer.
Consider vector bundles $\xi$ and $\eta$ over $X$, and a continuous fiberwise map $\Phi\colon\xi\longrightarrow\eta$ between them.
The map $\Phi$ {\em admits a $k$-multiplicity} if there exists a point $x\in X$ in the base space and $k$ pairwise distinct vectors $v_1,\ldots,v_k$ in $F_x(\xi)$, the fiber of $\xi$ over $x$, such that
\[
\Phi(x, v_1)=\cdots=\Phi(x,v_k).
\]
Here we abuse notation and instead of writing $\Phi(e)$ where $e\in E(\xi)$ we keep track of the fiber to which $e$ belongs to.

\medskip
The {\em fiberwise configuration space} of the continuous map $p_{\xi}$ is a subspace of the configuration space  $\conf(E(\xi),k)$ defined by 
\begin{eqnarray*}
\conf_{p_{\xi}}(E(\xi),k)  & = & \{ (e_1,\ldots,e_k)\in \conf(E(\xi),k) : p_{\xi}(e_i)=p_{\xi}(e_j)\text{ for all }i,j\}	\\
				   & = & \{ (x;v_1,\ldots,v_k): x\in X\text{ and }(v_1,\ldots,v_k)\in\conf(F_x(\xi),k) \}\\
				   & = & \{ (x;v_1\oplus\cdots\oplus v_k)\in E(\xi^{\oplus k}): x\in X\text{ and }v_i\neq v_j\text{ for }i\neq j \}.
\end{eqnarray*}
Here $\conf(E(\xi),k)$ denotes the classical configuration space of $k$ ordered pairwise distinct points in $E(\xi)$.
The projection map $p_{\xi}$ of the vector bundle $\xi$ induces the following bundle 
\[
\conf_{p_{\xi}}(E(\xi),k)\longrightarrow X,\qquad (x;v_1,\ldots,v_k)\longmapsto x.
\]

\medskip
Any continuous fiberwise map $\Phi\colon\xi\longrightarrow\eta$ gives rise of the following continuous fiberwise map
\begin{equation}
	\label{diag : Morphism - 1}
	\xymatrix{
				\conf_{p_{\xi}}(E(\xi),k)\ar[rr]^-{\Phi^{\oplus k}}\ar[dr] &     & E(\eta^{\oplus k})\ar[dl]\\
                         &X&          
}
\end{equation}
defined by 
\[
(x,v_1\oplus\cdots\oplus v_k)\longmapsto \Phi(x, v_1)\oplus\cdots\oplus\Phi(x,v_k).
\]
This is a restriction of the continuous fiberwise map $\Phi^{\oplus k}\colon\xi^{\oplus k}\longrightarrow\eta^{\oplus k}$.
The fiberwise configuration space $\conf_{p_{\xi}}(E(\xi),k)$ as well as the total space $E(\eta^{\oplus k})$ are equipped with the obvious fiberwise $\Sym_k$-actions in such a way that the fiberwise map $\Phi^{\oplus k}$ becomes an $\Sym_k$-equivariant map.
The action on $\conf_{p_{\xi}}(E(\xi),k)$ permutes $k$ pairwise distinct vectors in each fiber and therefore is free. 
Let $\underline{\R^{\oplus k}}$ denote the trivial bundle over $X$ with fiber $\R^{\oplus k}$ equipped with the action of $\Sym_k$ that permutes summands. 
Then $\eta^{\oplus k}\cong \eta\otimes_{\R}\underline{\R^k}$, where the action on the tensor product is the diagonal action, assuming trivial action on $\eta$.

\medskip
The first step in the finding a useful criterion for the existence of a $k$-multiplicity of a continuous fiberwise map is the following stabilization lemma.
\begin{lemma}
	\label{lem : stabilization}
	Let $\xi$, $\zeta$ and $\eta$ be vector bundles over the space $X$.
    The continuous fiberwise map $\Phi\colon\xi\longrightarrow\eta$ admits a $k$-multiplicity if and only if the continuous fiberwise map $\Phi\oplus \Id\colon\xi\oplus\zeta\longrightarrow\eta\oplus\zeta$ also admits a $k$-multiplicity.
\end{lemma}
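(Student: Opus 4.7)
The lemma is an elementary equivalence and my plan is to verify the two implications directly from the definition of $k$-multiplicity, using that the identity map is injective on fibers of $\zeta$.

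For the easy direction, suppose $\Phi$ admits a $k$-multiplicity, witnessed by a point $x\in X$ and pairwise distinct vectors $v_1,\ldots,v_k\in F_x(\xi)$ with $\Phi(x,v_1)=\cdots=\Phi(x,v_k)$. I would simply lift these to the vectors $v_1\oplus 0,\ldots,v_k\oplus 0\in F_x(\xi\oplus\zeta)$: they remain pairwise distinct because their $\xi$-components already are, and applying $\Phi\oplus\Id$ sends all of them to the common value $\Phi(x,v_1)\oplus 0\in F_x(\eta\oplus\zeta)$.

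For the converse, suppose $\Phi\oplus\Id$ admits a $k$-multiplicity at $x\in X$ with pairwise distinct witnesses $v_1\oplus w_1,\ldots,v_k\oplus w_k\in F_x(\xi)\oplus F_x(\zeta)$ satisfying
\[
\Phi(x,v_1)\oplus w_1=\cdots=\Phi(x,v_k)\oplus w_k.
\]
Comparing the second summands yields $w_1=\cdots=w_k$. Since the pairs $v_i\oplus w_i$ are pairwise distinct and their $\zeta$-components all agree, the $\xi$-components $v_1,\ldots,v_k$ must themselves be pairwise distinct vectors of $F_x(\xi)$. Comparing the first summands gives $\Phi(x,v_1)=\cdots=\Phi(x,v_k)$, which exhibits a $k$-multiplicity of $\Phi$.

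There is no real obstacle here: the lemma is a tautology once one unwinds the definition and uses the injectivity of $\Id_\zeta$ on fibers. The only thing worth being careful about is the logical structure of the converse, where one must first deduce that the $\zeta$-components coincide in order to conclude that the $\xi$-components are distinct. I would keep the proof to just a few lines along the above lines.
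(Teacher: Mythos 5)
Your proof is correct and follows essentially the same argument as the paper: lift the witnesses by appending $0$ in the $\zeta$-summand for one direction, and compare $\zeta$-components to force $w_1=\cdots=w_k$ (hence distinctness of the $v_i$) for the converse. In fact you are slightly more explicit than the paper in spelling out why the $\xi$-components remain pairwise distinct, which is the only point needing any care.
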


\noindent
Here $\Id\colon\zeta\longrightarrow\zeta$ denotes the identity map that is also a fiberwise map.
\begin{proof}
	{\bf (1)}
	Let the fiberwise map $\Phi\colon\xi\longrightarrow\eta$ admit a $k$-multiplicity.
	Consequently, there exists $x_0\in X$ and $(v_1,\ldots,v_k)\in \conf(F_{x_0}(\xi),k)$ such that 
	\[
	\Phi(x_0, v_1)=\cdots=\Phi(x_0,v_k).
	\]
	Since, $(\Phi\oplus \Id) (x,v\oplus u) = (x,\Phi(x,v)\oplus u)$ for every $(x,v\oplus u)\in F_x(\xi\oplus\eta)\cong F_x(\xi)\oplus F_x(\eta)$, we get the following $k$-multiplicity of $\Phi\oplus \Id$:
	\[
	(\Phi\oplus\Id)(x_0, v_1\oplus 0)=\cdots=(\Phi\oplus\Id)(x_0,v_k\oplus 0).
	\]
	Thus, if $\Phi$ admits a $k$-multiplicity, then $\Phi\oplus \Id$ also admits a $k$-multiplicity.
	
	\noindent
	{\bf (2)} Now, let the fiberwise map $\Phi\oplus \Id\colon\xi\oplus\zeta\longrightarrow\eta\oplus\zeta$ admit a $k$-multiplicity.
	Thus, there exists $x_0\in X$ and $(v_1\oplus u_1 ,\ldots,v_k\oplus u_k) \in \conf(F_{x_0}(\xi\oplus\zeta),k)$ such that $(\Phi\oplus\Id)(x_0, v_1\oplus u_1)=\cdots=(\Phi\oplus \Id)(x_0,v_k\oplus u_k)$, that is
	\[
	(x_0, \Phi(x_0,v_1)\oplus u_1)=\cdots=(x_0,\Phi(x_0,v_k)\oplus u_k)\in 
	F_{x_0}(\xi\oplus\zeta)\cong F_{x_0}(\xi)\oplus F_{x_0}(\zeta).
	\]
	Consequently, $u_1=\cdots=u_k$ and $\Phi(x_0,v_1)=\cdots=\Phi(x_0,v_k)$. 
	Hence, if $\Phi\oplus \Id$ admits a $k$-multiplicity, then $\Phi$ also admits a $k$-multiplicity.
\end{proof}

\medskip
Instead of studying $k$-multiplicity of $\Phi\colon\xi\longrightarrow\eta$ directly we are going to use Lemma~\ref{lem : stabilization} and consider $k$-multiplicity of $\Phi\oplus \Id\colon\xi\oplus\zeta\longrightarrow\eta\oplus\zeta$ where $\zeta$ is an inverse bundle of $\xi$, that means $\xi\oplus\zeta$ is a trivial vector bundle.
When convenient we denote the bundle $\zeta$ by $-\xi$.
In that case the fiberwise configuration space  associated to the projection map  $p_{\xi\oplus\zeta}\colon E(\xi\oplus\zeta)\longrightarrow X$ becomes a trivial bundle and decomposes as follows
\[
\conf_{p_{\xi\oplus\zeta}}(E(\xi\oplus\zeta),k)\cong  X\times\conf(\R^N,k),
\]
where $N=\rank\xi+\rank\zeta$, and the projection map coincides with the projection on the first coordinate.
The continuous fiberwise map \eqref{diag : Morphism - 1} induced now by $\Phi\oplus \Id$ has form
\begin{equation}
	\label{diag : Morphism - 2}
	\xymatrix{
				X\times\conf(\R^N,k)\cong \conf_{p_{\xi\oplus\zeta}}(E(\xi\oplus\zeta),k)\ar[rr]^-{(\Phi\oplus\Id)^{\oplus k}}\ar[dr]^-{p_1} &     & E((\eta\oplus\zeta)^{\oplus k})\ar[dl]\\
                         &X&          
}
\end{equation}
where $p_1$ denotes the projection on the first factor.
Typical fiber of the bundle $\conf_{p_{\xi\oplus\zeta}}(E(\xi\oplus\zeta),k)$ is homeomorphic to the configuration space $\conf(\R^N,k)$ and is equipped with the free action of the symmetric group $\Sym_k$.
If $T=\rank\eta + \rank\zeta$, then the fiber of $(\eta\oplus\zeta)^{\oplus k}$ is a real $\Sym_k$-representation $(\R^T)^{\oplus k}$ where the action is given by permutation of factors in the $k$-fold direct sum.
The actions on the fibers induce $\Sym_k$-actions on $\conf_{p_{\xi\oplus\zeta}}(E(\xi\oplus\zeta),k)$ and $ E((\eta\oplus\zeta)^{\oplus k})$.
Again, the fiberwise map $(\Phi\oplus \Id)^{\oplus k}$, as well as its fiberwise restrictions, are $\Sym_k$-equivariant maps.

\medskip
Next, consider vector bundle monomorphism $\Delta\colon\eta\oplus\zeta\longrightarrow (\eta\oplus\zeta)^{\oplus k}$, that is a continuous fiberwise map linear on each fiber, given by the diagonal embedding.
It is an $\Sym_k$-equivariant map, and its image $\alpha:=\im\Delta$ is an $\Sym_k$-invariant vector subbundle of $(\eta\oplus\zeta)^{\oplus k}$.
Let  $\beta$ be the orthogonal complement of $\alpha$ in $(\eta\oplus\zeta)^{\oplus k}$.
Hence, $(\eta\oplus\zeta)^{\oplus k}\cong\alpha\oplus\beta$, and $\beta$ is an $\Sym_k$-invariant vector subbundle of $(\eta\oplus\zeta)^{\oplus k}$.
On the level of the fibers this decomposition becomes $(\R^T)^{\oplus k}\cong W_k^{\oplus T}\oplus\R^T$, where $\R^T$ is a trivial $\Sym_k$-representation and $W_k=\{(y_1,\ldots,y_k)\in\R^k : \sum y_i=0\}$ is a subrepresentation of $\R^k$ where the action is given by coordinate permutation.
Observe that there exists an $\Sym_k$-equivariant isomorphism $\beta\cong (\eta\oplus\zeta)\otimes_{\R}\underline{W_k}$ where, as before, $\underline{W_k}$ denotes a trivial vector bundle over $X$ with fiber $W_k$.

\medskip
Furthermore, let $\Pi\colon (\eta\oplus\zeta)^{\oplus k}\cong\alpha\oplus\beta\longrightarrow\beta$ denote the vector bundle morphism given by the projection.
The projection $\Pi$ is an $\Sym_k$-equivariant map.
Now consider the following composition $\Pi\circ (\Phi\oplus\Id)^{\oplus k}$ of continuous fiberwise maps:
\begin{equation*}
	\xymatrix{
				X\times\conf(\R^N,k)\cong \conf_{p_{\xi\oplus\zeta}}(E(\xi\oplus\zeta),k)\ar[rr]^-{(\Phi\oplus\Id)^{\oplus k}}\ar[drr]^-{p_1} &   &   E((\eta\oplus\zeta)^{\oplus k})\ar[r]^-{\Pi}\ar[d]  & E(\beta)\ar[dl]\\
				&& X. &
}
\end{equation*}
The composition of fiberwise maps $\Pi\circ (\Phi\oplus\Id)^{\oplus k}$ is an $\Sym_k$-equivariant map with respect to the already defined actions.
Moreover, it has the following important property:
{\em If there exists a point $x_0\in X$ such that the image of the fiber of the bundle $\conf_{p_{\xi\oplus\zeta}}(E(\xi\oplus\zeta),k)$ over $x_0$ along the fiberwise  map $\Pi\circ (\Phi\oplus\Id)^{\oplus k}$ contains zero of the fiber over $x_0$ of $\beta$, then the continuous fiberwise map $\Phi\oplus\Id$, and consequently $\Phi$, admits a $k$-multiplicity.}
Thus, if the image of every continuous $\Sym_k$-equivariant fiberwise map $\conf_{p_{\xi\oplus\zeta}}(E(\xi\oplus\zeta),k)\longrightarrow E(\beta)$ does not avoid the zero section of the vector bundle $\beta$, then a $k$-multiplicity of any continuous fiberwise map $\xi\longrightarrow\eta$ is guaranteed. 

\medskip
In order to translate this property in a more convenient language we consider the following pullback vector bundle :
\begin{equation}
	\label{diag : pullback-bundle}
	\xymatrix{
	E(p_1^*\beta)\ar[rr]^-{\Psi}\ar[d] & &E(\beta)\ar[d]\\
	X\times\conf(\R^N,k)\cong \conf_{p_{\xi\oplus\zeta}}(E(\xi\oplus\zeta),k)\ar[rr]^-{p_1}\ar[urr]^{\Pi\circ(\Phi\oplus\Id)^{\oplus k}} & &X,
}
\end{equation}
where
\begin{multline*}
E(p_1^*\beta)=\{ (x,(v_1,\ldots,v_k),u)\in X\times\conf(\R^N,k)\times E(\beta) :\\
	 p_1(x,(v_1,\ldots,v_k))=x=p_{\beta}(u)  \},
\end{multline*}
and the bundle morphism $\Phi$ is given by $(x,(v_1,\ldots,v_k),u)\longmapsto u$.
The action of the symmetric group $\Sym_k$ on $E(p_1^*\beta)$ is given by the diagonal action on the product $X\times\conf(\R^N,k)\times E(\beta)$, having in mind that the actions on $X$ and $E(\beta)$ are trivial. 

\medskip
The bundle morphism $\Pi\circ(\Phi\oplus\Id)^{\oplus k}$ induces the $\Sym_k$-equivariant section of the pullback bundle $s\colon \conf_{p_{\xi\oplus\zeta}}(E(\xi\oplus\zeta),k)\longrightarrow E(p_1^*\beta)$ defined by
\begin{equation}
	\label{eq : s}
	(x,(v_1,\ldots,v_k)) \longmapsto \big(x,(v_1,\ldots,v_k),(\Pi\circ(\Phi\oplus\Id)^{\oplus k})(x,(v_1,\ldots,v_k))\big).
\end{equation} 
Now, {\em the continuous $\Sym_k$-equivariant fiberwise map $\Pi\circ(\Phi\oplus\Id)^{\oplus k}$ hits the zero section of the vector bundle $\beta$ if and only if the $\Sym_k$-equivariant section $s$ defined in \eqref{eq : s} hits the corresponding zero section}.
Thus, the aim is to prove that every $\Sym_k$-equivariant section of the pullback bundle hits the zero section.
Indeed, this would in particular imply that the section $s$ hits the zero section, consequently $\Pi\circ(\Phi\oplus\Id)^{\oplus k}$ hits the zero section and so the continuous fiberwise map $\Phi$ admits a $k$-multiplicity.

\medskip
The vector bundle $\beta$ is isomorphic to the tensor product $(\eta\oplus\zeta)\otimes_{\R}\underline{W_k}$.
Since the tensor product and the Whitney sum are compatible with the pullback we have
\begin{equation}
	\label{eq : tensor decomposition - 1}
	E(p_1^*\beta)\cong E(p_1^*\eta\oplus p_1^*\zeta)\otimes_{\R} E(p_1^*\underline{W_k}).
\end{equation}

\medskip
The action of $\Sym_k$ on the total and on the base space of the pullback vector bundle is free.
Moreover, the projection map of $p_1^*\beta$ is an $\Sym_k$-equivariant map.
Therefore, when dividing out the $\Sym_k$ action we obtain the vector bundle
\begin{equation}
	\label{eq : tensor decomposition - 1.5}
	E(p_1^*\beta)/\Sym_k\longrightarrow \conf_{p_{\xi\oplus\zeta}}(E(\xi\oplus\zeta),k)/\Sym_k.
\end{equation}
Every $\Sym_k$-equivariant section of the pullback bundle 
\[
E(p_1^*\beta)\longrightarrow \conf_{p_{\xi\oplus\zeta}}(E(\xi\oplus\zeta),k)
\]
induces a section of the quotient bundle 
\[
E(p_1^*\beta)/\Sym_k\longrightarrow \conf_{p_{\xi\oplus\zeta}}(E(\xi\oplus\zeta),k)/\Sym_k.
\]
Furthermore, presentation \eqref{eq : tensor decomposition - 1} implies the following presentation of the vector bundle 
\eqref{eq : tensor decomposition - 1.5} as a tensor product
\begin{equation}
	\label{eq : tensor decomposition - 2}
	E(p_1^*\beta)/\Sym_k\cong E(p_1^*\eta\oplus p_1^*\zeta)/\Sym_k\otimes_{\R} E(p_1^*\underline{W_k})/\Sym_k.
\end{equation}
Note that the action of the symmetric group $\Sym_k$ on $E(p_1^*\eta\oplus p_1^*\zeta)\cong E(p_1^*(\eta\oplus\zeta))$ is induced by the trivial action on $E(\eta\oplus\zeta)$ and by the diagonal action on the base space $X\times\conf(\R^N,k)$, assuming trivial action on $X$.

\medskip
We have derived a criterion for the existence of a $k$-multiplicity for a fiberwise map between two vector bundles over the same base space.
Assuming already introduced notation we formulate the following criterion.

\begin{theorem}
\label{th : criterion}
Let $X$ be a topological space with the homotopy type of a finite CW-complex, and let $k\geq 2$ be an integer.
Let  $\xi$ and $\eta$ be vector bundles over $X$.
If the vector bundle 
\begin{multline*}
E(p_1^*((\eta\oplus(-\xi))\otimes_{\R}\underline{W_k}))/\Sym_k\cong E(p_1^*\eta\oplus p_1^*(-\xi))/\Sym_k\otimes_{\R} E(p_1^*\underline{W_k})/\Sym_k \longrightarrow  \\ 
\conf_{p_{\xi\oplus\zeta}}(E(\xi\oplus(-\xi)),k)/\Sym_k \cong X\times\conf(\R^N,k)/\Sym_k	
\end{multline*}
does not admit a nowhere zero section, then any continuous fiberwise map $\xi\longrightarrow \eta$ admits a $k$-multiplicity. 
\end{theorem}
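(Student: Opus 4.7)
The plan is to argue by contrapositive: assume some continuous fiberwise map $\Phi : \xi \longrightarrow \eta$ does not admit a $k$-multiplicity, and construct a nowhere zero $\Sym_k$-equivariant section of the relevant bundle, which after quotienting yields a nowhere zero section of the quotient bundle appearing in the statement.

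The first step is to apply Lemma~\ref{lem : stabilization} and replace $\Phi$ by $\Phi \oplus \Id : \xi \oplus (-\xi) \longrightarrow \eta \oplus (-\xi)$. By the lemma the stabilized map also admits no $k$-multiplicity. The benefit of this replacement is that $\xi \oplus (-\xi)$ is trivial, so the associated fiberwise configuration space reduces to $X \times \conf(\R^N, k)$ on which $\Sym_k$ acts freely through the configuration factor. This trivialization is essential for the final descent.

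Next I would invoke the $\Sym_k$-equivariant orthogonal splitting $(\eta \oplus (-\xi))^{\oplus k} \cong \alpha \oplus \beta$ described in the preamble, where $\alpha$ is the diagonal subbundle and $\beta \cong (\eta \oplus (-\xi)) \otimes_\R \underline{W_k}$. Composing $(\Phi \oplus \Id)^{\oplus k}$ with the orthogonal projection $\Pi$ onto $\beta$ produces an $\Sym_k$-equivariant fiberwise map whose image meets the zero section of $\beta$ precisely when $\Phi \oplus \Id$, and hence $\Phi$, admits a $k$-multiplicity. This equivalence is the conceptual heart of the argument, but it is essentially immediate once one identifies $\alpha$ with the diagonal: a point of $\beta^{-1}(0)$ in a given fiber is exactly a $k$-tuple of equal images in $\eta \oplus (-\xi)$, and the preimage under $(\Phi \oplus \Id)^{\oplus k}$ of such a tuple is the desired $k$-multiplicity (the vectors on the configuration side are distinct by construction of $\conf_{p_{\xi\oplus\zeta}}$). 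Under our assumption, therefore, $\Pi \circ (\Phi \oplus \Id)^{\oplus k}$ avoids zero.

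The remainder is formal. Using the prescription in \eqref{eq : s}, the previous map encodes an $\Sym_k$-equivariant nowhere zero section of the pullback bundle $p_1^*\beta \longrightarrow X \times \conf(\R^N, k)$. Because the $\Sym_k$-action on the base is free and the action on the bundle is fiberwise linear, this descends to a nowhere zero section of the quotient bundle $E(p_1^*\beta)/\Sym_k$ over $X \times \conf(\R^N, k)/\Sym_k$. Combining the identification $\beta \cong (\eta \oplus (-\xi)) \otimes_\R \underline{W_k}$ with the compatibility of pullback with tensor products and Whitney sums yields the decomposition \eqref{eq : tensor decomposition - 2}, which is precisely the bundle in the hypothesis. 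This contradicts the assumed non-existence of a nowhere zero section and finishes the proof. The only point requiring care is the careful tracking of $\Sym_k$-equivariance through the stabilization, the orthogonal splitting, and the pullback, but this has already been verified in the preceding discussion.
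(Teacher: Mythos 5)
Your proposal is correct and follows essentially the same route as the paper: stabilization via Lemma~\ref{lem : stabilization}, the splitting $(\eta\oplus(-\xi))^{\oplus k}\cong\alpha\oplus\beta$ with the projection $\Pi$, the induced equivariant section \eqref{eq : s}, and descent to the quotient bundle via \eqref{eq : tensor decomposition - 2}. The only difference is that you phrase the argument contrapositively (no $k$-multiplicity yields a nowhere zero section), whereas the paper argues directly that the induced section must hit the zero section; these are logically the same argument.
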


In the case of a fiberwise map between complex vector bundles the following analogous criterion for the existence of a $k$-multiplicity can be derived in the footsteps of the construction presented in this section. 
The only difference occurs in the place of the vector bundle $\underline{W_k}$, which will be substituted by its complexification $\underline{W_k}\otimes_{\R}\C$, which as a real vector bundle is isomorphic to $\underline{W_k}^{\oplus 2}$.

\begin{theorem}
\label{th : criterion-C}
Let $X$ be a topological space with the homotopy type of a finite CW-complex, and let $k\geq 2$ be an integer.
Let  $\xi$ and $\eta$ be complex vector bundles over $X$.
If the complex vector bundle 
\begin{multline*}
E(p_1^*((\eta\oplus(-\xi))\otimes_{\C}(\underline{W_k}\otimes_{\R}\C)))/\Sym_k\cong \\
E(p_1^*\eta\oplus p_1^*(-\xi))/\Sym_k\otimes_{\C} E(p_1^*(\underline{W_k}\otimes_{\R}\C))/\Sym_k \longrightarrow  \\ 
\conf_{p_{\xi\oplus\zeta}}(E(\xi\oplus(-\xi)),k)/\Sym_k \cong X\times\conf(\C^N,k)/\Sym_k	
\end{multline*}
does not admit a nowhere zero section, then any continuous fiberwise map $\xi\longrightarrow \eta$ admits a $k$-multiplicity. 
\end{theorem}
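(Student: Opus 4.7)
The plan is to repeat verbatim the construction carried out before Theorem~\ref{th : criterion}, working throughout in the complex category rather than the real one. First I would observe that the stabilization lemma (Lemma~\ref{lem : stabilization}) holds in the complex setting with an identical proof: the argument is purely set-theoretic and uses only that a direct sum of fiberwise maps is fiberwise and that $(\Phi\oplus\Id)(x,v\oplus u)=(x,\Phi(x,v)\oplus u)$. Applied to $\zeta:=-\xi$, a complex inverse bundle of $\xi$, it reduces the problem to studying $k$-multiplicities of $\Phi\oplus\Id\colon\xi\oplus(-\xi)\longrightarrow\eta\oplus(-\xi)$, whose source has a trivial fiberwise configuration space $\conf_{p_{\xi\oplus(-\xi)}}(E(\xi\oplus(-\xi)),k)\cong X\times\conf(\C^N,k)$, where $N=\rank_\C\xi+\rank_\C(-\xi)$.

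Next I would consider the diagonal embedding $\Delta\colon\eta\oplus(-\xi)\longrightarrow(\eta\oplus(-\xi))^{\oplus k}$ of complex vector bundles, set $\alpha:=\im\Delta$, and let $\beta$ be the complex orthogonal complement of $\alpha$ inside $(\eta\oplus(-\xi))^{\oplus k}$. This produces an $\Sym_k$-invariant direct sum decomposition $(\eta\oplus(-\xi))^{\oplus k}\cong\alpha\oplus\beta$. The key identification is on fibers: the $\Sym_k$-action on $\C^k$ by coordinate permutation is the complexification of the corresponding action on $\R^k$, hence the complex complement of the diagonal $\C$ in $\C^k$ is canonically $W_k\otimes_\R\C$, and we obtain an $\Sym_k$-equivariant complex isomorphism $\beta\cong(\eta\oplus(-\xi))\otimes_\C(\underline{W_k}\otimes_\R\C)$.

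Composing the fiberwise map $(\Phi\oplus\Id)^{\oplus k}$ with the complex projection $\Pi\colon(\eta\oplus(-\xi))^{\oplus k}\longrightarrow\beta$ gives an $\Sym_k$-equivariant continuous fiberwise map $\conf_{p_{\xi\oplus(-\xi)}}(E(\xi\oplus(-\xi)),k)\longrightarrow E(\beta)$. As in the real case, this map hits the zero section of $\beta$ over some $x_0\in X$ exactly when there are $k$ pairwise distinct vectors $v_1,\ldots,v_k$ in the fiber over $x_0$ with $\Phi(x_0,v_1)=\cdots=\Phi(x_0,v_k)$, that is, a $k$-multiplicity of $\Phi$. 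Pulling back $\beta$ along $p_1\colon X\times\conf(\C^N,k)\longrightarrow X$ turns this fiberwise map into an $\Sym_k$-equivariant section of $p_1^*\beta\cong p_1^*\bigl((\eta\oplus(-\xi))\otimes_\C(\underline{W_k}\otimes_\R\C)\bigr)$.

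Finally, I would pass to the free $\Sym_k$-quotient. Since $\Sym_k$ acts freely on $\conf(\C^N,k)$, the quotient of the pullback bundle by $\Sym_k$ is a genuine complex vector bundle over $X\times\conf(\C^N,k)/\Sym_k$, and every $\Sym_k$-equivariant section of the pullback descends to an ordinary section of this quotient bundle. If the quotient bundle admits no nowhere-vanishing section, then in particular the distinguished section descended from $\Pi\circ(\Phi\oplus\Id)^{\oplus k}$ must vanish somewhere, producing the desired $k$-multiplicity of $\Phi$. The only subtle point, and therefore the chief thing to verify with care, is the fiberwise identification $\beta\cong(\eta\oplus(-\xi))\otimes_\C(\underline{W_k}\otimes_\R\C)$; once this is in place, every remaining step is a direct transcription of the real construction.
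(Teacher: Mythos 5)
Your proposal is correct and follows essentially the same route as the paper, which itself proves Theorem~\ref{th : criterion-C} by repeating the real construction with $\underline{W_k}$ replaced by its complexification $\underline{W_k}\otimes_{\R}\C$. Your explicit verification of the key fiberwise identification $\beta\cong(\eta\oplus(-\xi))\otimes_{\C}(\underline{W_k}\otimes_{\R}\C)$, via the decomposition $\C^k\cong\C\oplus(W_k\otimes_{\R}\C)$ of the complexified permutation representation, is exactly the point the paper leaves implicit, and the rest (stabilization, diagonal splitting, pullback section, free quotient) matches the paper's argument step for step.
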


\section{From a continuous map to a multiplicity of fiberwise map}

In this section we relate the existence of a local $k$-multiplicity for continuous maps between two Riemannian manifolds with the existence of a $k$-multiplicity for a continuous fiberwise map between appropriately constructed vector bundles. 

\medskip
Let $M$ be an $m$-dimensional smooth closed manifold, let $N$ be an $n$-dimensional smooth manifold, and let $k\geq 2$ be an integer. 
A continuous map $f\colon M\longrightarrow N$ {\em admits a $k$-multiplicity} if there exist $k$ pairwise distinct points $x_1,\ldots,x_k$ on $M$ such that $f(x_1)=\cdots=f(x_k)$.
For example, the existence of a $2$-multiplicity for any continuous map $M\longrightarrow N$ between manifolds $M$ and $N$ means that the manifold $M$ cannot be embedded into the manifold $N$.

\medskip
Let us assume that in addition manifold $M$ is equipped with some Riemannian metric.
A continuous map $f\colon M\longrightarrow N$ \emph{admits a local $k$-multiplicity} if, for every real number $\omega >0$, there exist $k$ pairwise distinct points $x_1,\ldots,x_k$ in $M$ such that 
\[
f(x_1)=\cdots=f(x_k)
\qquad\text{and}\qquad
\diam\{x_1,\ldots,x_k\}<\omega.
\]
Note that in the case when $M$ is compact this definition does not depend on the choice of a Riemannian metric.

\medskip
For our methods to work smoothly we need to make further assumptions on manifolds $M$ and $N$.
Let us assume that $M$ is a compact Riemannian manifold and that $N$ is also a Riemannian manifold with positive injectivity radius, denoted by $\rho(N)>0$.
The compactness of $M$ implies that injectivity radius $\rho(M)$ of $M$ is also positive.
Furthermore, let us fix a real number $\omega >0$.
The method we present can be in principle also applied when manifold $M$ is open, but additional assumptions need to be imposed.

Now, let $D_{\delta}(\tau M)$ and $D_{\varepsilon}(\tau N)$ denote open disc subbundles (of disc radius $\delta$ and $\varepsilon$, respectively) of the tangent bundles $\tau M$ and $\tau N$.  
The exponential maps associated to $M$ and $N$ are denoted by 
\[
\exp_M\colon E(\tau M)\longrightarrow M\times M
\qquad\text{and}\qquad
\exp_N\colon E(\tau N)\longrightarrow N\times N.
\]
Then, for a continuous map $f\colon M\longrightarrow N$ and for every $0<\varepsilon<\rho(N)$, there exists $0<\delta<\min \{\rho(M),\tfrac{\omega}2\}$ such that
\[
(f\times f)\circ \exp_M(E(D_{\delta}(\tau M)))\subseteq \exp_N(E(D_{\varepsilon}(\tau N))).
\]
Since the exponential map is injective inside the injectivity radius the following composition is well defined
\begin{equation}
	\label{eq : **}
	\Phi':=\exp_N^{-1}\circ\, (f\times f) \circ \exp_M\colon E(D_{\delta}(\tau M)) \longrightarrow E(D_{\varepsilon}(\tau N)),
\end{equation}
and illustrated in the diagram below:
\begin{equation*}
	\xymatrix{
				E(D_{\delta}(\tau M))\ar[r]^-{\exp_M}\ar[drr]_-{\Phi'} &  M\times M\ar[r]^{f\times f}  & N\times N\\
				&  & E(D_{\varepsilon}(\tau N))\ar[u]_{\exp_N}.
}
\end{equation*}
The map $\Phi'$ is a continuous fiberwise map covering the continuous map $f\colon M\longrightarrow N$. 
It plays a role of a differential map $df$ that cannot be considered in this case since $f$ is not assumed to be smooth.
Furthermore, the continuous fiberwise map $\Phi'$ induces a continuous fiberwise map $\Phi\colon E(D_{\delta}(\tau M))\longrightarrow f^* E(D_{\varepsilon}(\tau N))$ between the bundle $E(D_{\delta}(\tau M))$ and the pullback bundle $f^* E(D_{\varepsilon}(\tau N))$ by
\begin{equation}
	\label{eq : *}
	\Phi (x,v) = (x,\Phi'(x,v)),
\end{equation}
and the following diagram commutes
\begin{equation*}
	\xymatrix{
				E(D_{\delta}(\tau M))\ar[r]^-{\Phi}\ar[dr] \ar@/^1.5pc/[rr]^{\Phi'} & f^* E(D_{\varepsilon}(\tau N))\ar[r]\ar[d] & E(D_{\varepsilon}(\tau N))\ar[d]\\
				                        &  M\ar[r]^-{f}                          & N .             
}
\end{equation*}

\medskip
Now we prove a theorem that relates the existence of a $k$-multiplicity of an appropriately defined continuous fiberwise map with the existence of $k$-multiplicity for a continuous map $f\colon M\longrightarrow N$.
In the following we use already introduced notations.

\begin{theorem}
	\label{th : from map to fiberwise map depending on f}
	Let $k\geq 2$ be an integer, let $M$ be a compact Riemannian manifold, and let $N$ be a Riemannian manifolds with positive injectivity radius.
	If, for a continuous map $f\colon M\longrightarrow N$, every continuous fiberwise map $E(\tau M)\longrightarrow f^* E(\tau N)$ admits a $k$-multiplicity, then the map $f$ admits a local $k$-multiplicity.
\end{theorem}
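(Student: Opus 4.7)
The plan is to derive a local $k$-multiplicity of $f$ at an arbitrary scale $\omega>0$ by producing, for each such $\omega$, a continuous fiberwise map $E(\tau M)\longrightarrow f^{*}E(\tau N)$ whose hypothesised $k$-multiplicity pulls back to $k$ distinct nearby points of $M$ with a common $f$-image. First I would fix $\omega>0$, choose $\varepsilon$ with $0<\varepsilon<\rho(N)$, and then $\delta$ with $0<\delta<\min\{\rho(M),\omega/2\}$ so that the construction in equations \eqref{eq : **} and \eqref{eq : *} produces the fiberwise maps $\Phi'\colon E(D_\delta(\tau M))\longrightarrow E(D_\varepsilon(\tau N))$ and $\Phi\colon E(D_\delta(\tau M))\longrightarrow f^{*}E(D_\varepsilon(\tau N))$.

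The key observation I would record next is that any $k$-multiplicity of $\Phi$ already yields the desired local $k$-multiplicity of $f$: if $v_1,\dots,v_k$ are pairwise distinct vectors in $D_\delta(T_xM)$ with $\Phi(x,v_i)$ all equal, then the values $\Phi'(x,v_i)$ agree in $D_\varepsilon(T_{f(x)}N)$; applying $\exp_{f(x)}$, which is injective on $D_\varepsilon(T_{f(x)}N)$, gives $f(\exp_x(v_i))=f(\exp_x(v_j))$ for all $i,j$. The points $x_i:=\exp_x(v_i)$ are pairwise distinct because $\exp_x$ is a diffeomorphism on $D_\delta(T_xM)$, and the triangle inequality gives $\diam\{x_1,\dots,x_k\}\le 2\delta<\omega$.

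Next I would extend $\Phi$, defined only on the disc subbundles, to a fiberwise map between the whole tangent bundles so that the hypothesis of the theorem can be invoked. I would choose fiberwise radial homeomorphisms $h_M\colon E(\tau M)\longrightarrow E(D_\delta(\tau M))$ and $h_N\colon E(D_\varepsilon(\tau N))\longrightarrow E(\tau N)$ (for instance $v\mapsto \delta v/\sqrt{1+\|v\|^2}$ for the former and its obvious analogue for the latter), and set $\widetilde\Phi:=f^{*}h_N\circ\Phi\circ h_M\colon E(\tau M)\longrightarrow f^{*}E(\tau N)$. This is a continuous fiberwise map, so by hypothesis it admits a $k$-multiplicity. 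Because $h_M$ and $f^{*}h_N$ are fiberwise bijections, any $k$-multiplicity of $\widetilde\Phi$ transports back to a $k$-multiplicity of $\Phi$ in some fiber $D_\delta(T_xM)$, which by the previous paragraph yields a local $k$-multiplicity of $f$ of diameter less than $\omega$; since $\omega$ was arbitrary, this completes the argument.

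The proof is purely geometric: it uses only the positive injectivity radii of $M$ and $N$ together with a fiberwise rescaling, and requires no cohomological or equivariant input. The main, and essentially only, technical point is to make sure that the rescaling step genuinely preserves and reflects $k$-multiplicities, which reduces to the fact that $h_M$, $f^{*}h_N$, $\exp_x$ and $\exp_{f(x)}$ are all injective on the relevant domains.
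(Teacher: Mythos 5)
Your proposal is correct and follows essentially the same route as the paper's proof: construct $\Phi'$ and $\Phi$ via the exponential maps at scale $\delta<\min\{\rho(M),\omega/2\}$, transfer the hypothesised $k$-multiplicity through the fiberwise homeomorphisms between the full tangent bundles and the disc subbundles, and then read off $k$ distinct points $\exp_x(v_i)$ with equal $f$-images and diameter $<\omega$. Your treatment is in fact slightly more explicit than the paper's at the rescaling step (the paper simply invokes the fiberwise homeomorphisms), but the argument is the same.
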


\begin{proof}
	Let us fix $\omega>0$ and $0<\varepsilon<\rho(N)$.
	There exists $0<\delta<\min \{\rho(M),\tfrac{\omega}2\}$ such that
	\[
		(f\times f)\circ \exp_M(E(D_{\delta}(\tau M)))\subseteq \exp_N(E(D_{\varepsilon}(\tau N))).
	\]
	Now we can construct the fiberwise map $\Phi\colon E(D_{\delta}(\tau M))\longrightarrow f^* E(D_{\varepsilon}(\tau N))$, as defined in \eqref{eq : *}.

	There are fiberwise homeomorphisms 
	\[
	E(\tau M)\cong E(D_{\delta}(\tau M))\quad\text{and}\quad E(\tau N)\cong E(D_{\varepsilon}(\tau N).
	\] 
	Thus the assumption that every fiberwise map $E(\tau M)\longrightarrow  E(f^*\tau N)$ admits a $k$-multiplicity implies that  every continuous fiberwise map 
	\[
	D_{\delta}(\tau M)\longrightarrow  E(f^*D_{\varepsilon}(\tau N)),
	\]
	also admits a $k$-multiplicity, for appropriate choice of $\delta$ and $\varepsilon$.
	Consequently, the same is true for the map $\Phi$.
	Hence, there exists a point $x\in M$ and $k$ pairwise distinct vectors $v_1,\ldots,v_k\in T_xM$ of the norm less than $\delta$, such that
	\[
	\Phi(x,v_1)=\cdots=\Phi(x,v_k).
	\]
	By the definition $\Phi (x,v) = (x,\Phi'(x,v))$ and consequently
	\[
	\Phi'(x,v_1)=\cdots=\Phi'(x,v_k).
	\]
	Furthermore, in \eqref{eq : **}, we have defined that $\Phi'=\exp_N^{-1}\circ\, (f\times f) \circ \exp_M$ and so $\exp_N\circ\,\Phi'= (f\times f) \circ \exp_M$ implying
	\[
	(f\times f) \circ \exp_M(x,v_1)=\cdots=(f\times f) \circ \exp_M(x,v_k).
	\]
	Let $y_i$ denote the point on the geodesic, that starts at $x$ in direction $v_i$, on length $\norm{v_i}$ from $x$, $i\in\{1,\ldots,k\}$.
	Then the previous equalities imply that
	\[
	(f(x),f(y_1))=\cdots=(f(x),f(y_k)) \quad\Longrightarrow\quad f(y_1)=\cdots=f(y_k).
	\]
	Since $v_1,\ldots,v_k$ are pairwise distinct with the norm less then injectivity radius of $M$ we have that $y_1,\ldots,y_k$ are also pairwise distinct.
	Moreover, since $\delta<\tfrac{\omega}2$ we have that $\diam\{y_1,\ldots,y_k\}<\omega$.
	Therefore, $f$ admits a local $k$-multilicity.
\end{proof}

Combining Theorem~\ref{th : from map to fiberwise map depending on f} we just proved with Theorem~\ref{th : criterion} we get the following criterion for the existence of local $k$-multiplicity of the given continuous map $f\colon M\longrightarrow N$.

\begin{theorem}
	\label{th : criterion for continuous maps}
	Let $k\geq 2$ be an integer, let $M$ be a compact Riemannian manifold, and let $N$ be a Riemannian manifold with positive injectivity radius.
	If, for a continuous map $f\colon M\longrightarrow N$, the vector bundle 
	\begin{multline*}
E\big(p_1^*((f^*\tau N\oplus(-\tau M))\otimes_{\R}\underline{W_k})\big)/\Sym_k\cong \\
E\big(p_1^*(f^*\tau N\oplus(-\tau M))\big)/\Sym_k\otimes_{\R} E(p_1^*\underline{W_k})/\Sym_k \longrightarrow  \\ 
M\times\conf(\R^{\dim\tau M+\dim(-\tau M)},k)/\Sym_k	
\end{multline*}
does not admit a nowhere zero section, then the map $f$ admits a local $k$-multiplicity. 
\end{theorem}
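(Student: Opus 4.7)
The plan is to combine the two preceding results, namely Theorem~\ref{th : from map to fiberwise map depending on f} and Theorem~\ref{th : criterion}, and the rest of the argument is just matching up the setups.

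First I would apply Theorem~\ref{th : criterion} with the base space $X=M$ and the vector bundles $\xi:=\tau M$ and $\eta:=f^*\tau N$. With these substitutions, the pullback tensor bundle appearing in the hypothesis becomes exactly the one displayed in the statement of the theorem under proof, the inverse bundle $-\xi=-\tau M$ exists because $M$ is compact (so it has the homotopy type of a finite CW-complex, and a stable inverse can be chosen), and the fiberwise configuration space $\conf_{p_{\xi\oplus(-\xi)}}(E(\xi\oplus(-\xi)),k)/\Sym_k$ is identified with $M\times \conf(\R^N,k)/\Sym_k$ for $N=\dim\tau M+\dim(-\tau M)$. The hypothesis that no nowhere zero section exists then yields, by Theorem~\ref{th : criterion}, that every continuous fiberwise map $\tau M\longrightarrow f^*\tau N$ admits a $k$-multiplicity.

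Next I would invoke Theorem~\ref{th : from map to fiberwise map depending on f}. Its hypothesis is precisely that every continuous fiberwise map $E(\tau M)\longrightarrow f^*E(\tau N)$ admits a $k$-multiplicity, which is just a reformulation of the conclusion of the previous step on the level of total spaces. The positive injectivity radius of $N$ is already in the hypothesis of the theorem under proof, and the compactness of $M$ guarantees $\rho(M)>0$, so Theorem~\ref{th : from map to fiberwise map depending on f} applies and gives that $f$ admits a local $k$-multiplicity.

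There is no real obstacle in this proof, since the two theorems being combined were deliberately set up for precisely this synthesis; the only thing to verify is the bookkeeping of the notation, in particular that the $-\tau M$ (inverse bundle) used in the statement corresponds to the $\zeta$ playing the role of an inverse of $\xi$ in Theorem~\ref{th : criterion}, and that the tensor-product presentation $(\eta\oplus(-\xi))\otimes_{\R}\underline{W_k}$ obtained at the end of Section~2 matches the bundle written in the statement. Once this is checked, the proof reduces to the single sentence: apply Theorem~\ref{th : criterion} to obtain the fiberwise $k$-multiplicity property, and then feed this into Theorem~\ref{th : from map to fiberwise map depending on f} to deduce the local $k$-multiplicity of $f$.
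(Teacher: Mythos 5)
Your proposal is correct and matches the paper's own derivation: the paper obtains Theorem~\ref{th : criterion for continuous maps} precisely by applying Theorem~\ref{th : criterion} with $X=M$, $\xi=\tau M$, $\eta=f^*\tau N$ and then feeding the resulting fiberwise $k$-multiplicity into Theorem~\ref{th : from map to fiberwise map depending on f}. The bookkeeping you flag (identifying $-\tau M$ with the inverse $\zeta$ and the tensor presentation of the bundle) is exactly what the paper's setup provides, so nothing further is needed.
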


Again, like in the case of Theorem \ref{th : criterion-C}, we can derive a criterion for the existence of local $k$-multiplicity for a given continuous map $f\colon M\longrightarrow N$ between, now, complex manifolds $M$ and $N$. 
As already explained, the only difference is in the place of the vector bundle $\underline{W_k}$, which is substituted by its complexification $\underline{W_k}\otimes_{\R}\C$.

\begin{theorem}
	\label{th : criterion for continuous maps-C}
	Let $k\geq 2$ be an integer, let $M$ be a compact complex Riemannian manifold, and let $N$ be a complex Riemannian manifold with positive injectivity radius.
	If, for a continuous map $f\colon M\longrightarrow N$, the vector bundle 
\begin{multline*}
E\big(p_1^*((f^*\tau N\oplus(-\tau M))\otimes_{\C}(\underline{W_k}\otimes_{\R} \C)\big)/\Sym_k\cong \\
E\big(p_1^*(f^*\tau N\oplus(-\tau M))\big)/\Sym_k\otimes_{\C} E(p_1^*(\underline{W_k}\otimes_{\R} \C))/\Sym_k \longrightarrow  \\ 
M\times\conf(\C^{\dim\tau M+\dim(-\tau M)},k)/\Sym_k	
\end{multline*}
does not admit a nowhere zero section, then the map $f$ admits a local $k$-multiplicity. 
\end{theorem}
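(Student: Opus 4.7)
The plan is to derive this theorem by formally combining Theorem~\ref{th : from map to fiberwise map depending on f} with Theorem~\ref{th : criterion-C}, in exactly the same pattern that yielded Theorem~\ref{th : criterion for continuous maps} from Theorem~\ref{th : from map to fiberwise map depending on f} and Theorem~\ref{th : criterion}. The statement involves no new topological ingredients beyond those two theorems, so the proof amounts to checking that each hypothesis transfers cleanly to the complex-Riemannian setting.

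First, I would observe that Theorem~\ref{th : from map to fiberwise map depending on f} applies verbatim in this setting. A compact complex manifold equipped with a Hermitian metric carries an underlying Riemannian structure with positive injectivity radius (by compactness), and by hypothesis the same holds for $N$. The proof of Theorem~\ref{th : from map to fiberwise map depending on f} is purely Riemannian: the exponential maps $\exp_M$ and $\exp_N$ are used only to manufacture the continuous fiberwise map $\Phi\colon E(D_\delta(\tau M))\longrightarrow f^*E(D_\varepsilon(\tau N))$ from $f$, and nowhere is real (as opposed to complex) linearity on fibers invoked. In particular $\Phi$ is merely continuous, which is exactly the class of fiberwise maps handled by the complex criterion.

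Next, I would apply Theorem~\ref{th : criterion-C} to the complex vector bundles $\xi:=\tau M$ and $\eta:=f^*\tau N$ over $X:=M$. The hypothesis of the present theorem on the non-existence of a nowhere zero section of the displayed bundle is precisely the hypothesis of Theorem~\ref{th : criterion-C} for this choice of $\xi$ and $\eta$, with $N=\dim_{\C}\tau M+\dim_{\C}(-\tau M)$ and the trivial bundle fiber $\underline{W_k}\otimes_{\R}\C$ appearing in place of $\underline{W_k}$ exactly because the fiberwise configuration space is now modelled on $\conf(\C^N,k)$. The conclusion of Theorem~\ref{th : criterion-C} then asserts that every continuous fiberwise map $\tau M \longrightarrow f^*\tau N$ admits a $k$-multiplicity.

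Combining these two inputs finishes the argument: Theorem~\ref{th : from map to fiberwise map depending on f} turns the $k$-multiplicity guarantee for every continuous fiberwise map $E(\tau M)\longrightarrow f^*E(\tau N)$ into a local $k$-multiplicity for $f$ itself. The only mildly nontrivial step is the first one, verifying that the phrase \emph{complex Riemannian manifold with positive injectivity radius} is strong enough to invoke Theorem~\ref{th : from map to fiberwise map depending on f}; but once one reads it as \emph{Hermitian manifold whose underlying Riemannian structure has positive injectivity radius}, there is no obstruction, and no genuinely new idea is required. The proof is therefore essentially a packaging exercise rather than a substantive argument.
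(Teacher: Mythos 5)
Your proposal is correct and follows exactly the route the paper intends: the paper itself obtains Theorem~\ref{th : criterion for continuous maps-C} by combining Theorem~\ref{th : from map to fiberwise map depending on f} (whose proof is purely Riemannian and indifferent to the complex structure) with Theorem~\ref{th : criterion-C} applied to $\xi=\tau M$, $\eta=f^*\tau N$ over $X=M$, noting only that $\underline{W_k}$ is replaced by $\underline{W_k}\otimes_{\R}\C$. Your observation that a continuous fiberwise map needs no linearity, so the real and complex settings handle the same class of maps, is precisely the point that makes this packaging work.
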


\section{Proofs of the main results}

\subsection{Proof of Theorem \ref{th : main 0}}
Let us fix a continuous map $f\colon M\longrightarrow N$ and assume that:
\begin{compactitem}[\ \ $\circ$]
	\item  $k\geq 2$ is a power of two,
	\item  $M$ is a compact $m$-dimensional smooth manifold,
	\item  $N$ is an $n$-dimensional smooth manifold,
	\item  $r:=2m-1-s\leq 2m-1$, and 
	\item $u_r(f^*\tau N\oplus(-\tau M))=\det (w_{m+n-r-i+j}(f^*\tau N\oplus(-\tau M)))_{1\leq i,j\leq k-1}\neq 0$.
\end{compactitem}
Since $M$ and $N$ are smooth manifolds according to the work of Green \cite{Green1978} they can be equipped with a Riemannian metric in such a way that corresponding injectivity radii are positive.

Now, according to Theorem \ref{th : criterion for continuous maps} a continuous map $f$ admits a local $k$-multiplicity if the vector bundle
\begin{multline}
\label{eq : bundle-01}
E\big(p_1^*(f^*\tau N\oplus(-\tau M))\big)/\Sym_k\otimes_{\R} E(p_1^*\underline{W_k})/\Sym_k  \longrightarrow \\
M\times\conf(\R^{\dim TM+\dim(-TM)},k)/\Sym_k	
\end{multline}
does not admit a nowhere zero section.
Thus, it suffices to prove that the Euler class, or the top Stiefel--Whitney class of the vector bundle \eqref{eq : bundle-01} does not vanish.

The Whitney embedding theorem applied on the Riemannian manifold $M$ implies that $M$ can be embedded into $\R^{2m}$.
Consequently, we can assume that the inverse bundle $-\tau M$ is the normal bundle of the existing embedding of $M$ into $\R^{2m}$, and therefore an $m$-dimensional vector bundle.
Thus, the bundle \eqref{eq : bundle-01} becomes
\begin{multline}
\label{eq : bundle-02}
E\big(p_1^*(f^*\tau N\oplus(-\tau M))\big)/\Sym_k\otimes_{\R} E(p_1^*\underline{W_k})/\Sym_k  \longrightarrow \\
M\times\conf(\R^{2m},k)/\Sym_k .
\end{multline}
In order to prove the theorem {\em we will show that the mod $2$ Euler class, which is the $((m+n)(k-1))$-st Stiefel--Whitney class of the vector bundle \eqref{eq : bundle-02} does not vanish}.

To simplify notation let us denote by $\xi$ the vector bundle $E(p_1^*\underline{W_k})/\Sym_k$ and by $\eta$ the bundle $E\big(p_1^*(f^*\tau N\oplus(-\tau M))\big)/\Sym_k$.
With the notation just introduced we will compute the $((m+n)(k-1))$-st Stiefel--Whitney class
\[
w:=w_{(m+n)(k-1)}(\eta\otimes_{\R}\xi)
\]
of the vector bundle $\eta\otimes_{\R}\xi$.
The cohomology class $w$ lives in the following cohomology group that decomposes into the direct sum by the K\"unneth formula \cite[Thm.\,VI.1.6]{Bredon1993}:
\begin{multline*}
H^{(m+n)(k-1)}(M\times \conf(\R^{2m},k)/\Sym_k;\F_2)\cong \\
\bigoplus_{i=0}^{(m+n)(k-1)}H^{i}(M;\F_2)\otimes_{\F_2} H^{(m+n)(k-1)-i}( \conf(\R^{2m},k)/\Sym_k;\F_2).	
\end{multline*}

\noindent
Now we prove that the Stiefel--Whitney class $w$ does not vanish is several steps.

\subsubsection{~}
We first analyze the vector bundle $\xi=E(p_1^*\underline{W_k})/\Sym_k$.
Consider the vector bundle
\[
 \zeta \colon  W_k   \longrightarrow   \conf(\R^{2m},k)\times_{\Sym_k}W_k   \longrightarrow   \conf(\R^{2m},k)/\Sym_k,
\]
and the projection map 
\[
p_2 \colon M\times  \conf(\R^{2m},k)/\Sym_k \longrightarrow  \conf(\R^{2m},k)/\Sym_k.
\]
Then there is an isomorphism of vector bundles $\xi\cong p_2^{*}\zeta$.
Consequently, by the naturality property of the Stiefle--Whitney classes \cite[Ax.\,2, p.\,37]{Milnor1974} and the fact that $p_2$ is a projection onto the second factor we have that
\begin{equation}
	\label{formula-2}
	w_{i}(\xi) = p_2^*(w_{i} (\zeta))=1\otimes_{\F_2} w_{i} (\zeta)
	 \in H^{0}(M;\F_2)\otimes_{\F_2} H^{i}( \conf(\R^{2m},k)/\Sym_k;\F_2),
\end{equation}
for any integer $i\geq 0$.
In particular, we have that $w_{k-1}(\xi)= 1\otimes_{\F_2} w_{k-1} (\zeta)$.
According to \cite[Lem.\,8.14]{Blagojevic2105}, and as in \cite[eq.\,(2), p.\,7]{Blagojevic2016}, the following equivalence holds
\begin{equation}
	\label{formula-4}
	w_{k-1}(\xi)^{j}\neq 0 \quad\qquad\text{if and only if}\quad\qquad 0\leq j\leq 2m-1. 
\end{equation}
More information about characteristic classes of the vector bundle $\xi$ is given in the following lemma, \cite[Cor.\,2.16]{Blagojevic2016}.
\begin{lemma}
	\label{lem : comp}
	Consider a matrix $[j_{r,s}]_{1\leq r\leq t,1\leq s\leq k-1}$ of non-negative integers with pairwise distinct rows.
	Assume that, for some $0\leq j\leq 2m-1$ and each $1\leq r\leq t$,
	\[
 		\sum_{s=1}^{k-1}s\cdot j_{r,s} = (k-1)j.
	\]
	Then, for some $\lambda_1,\ldots,\lambda_t\in\F_2$,
	\[
 	\sum_{r=1}^{t}\lambda_r\cdot w_1(\xi)^{j_{r,1}}\cdots w_{k-2}(\xi)^{j_{r,k-2}}w_{k-1}(\xi)^{j_{r,k-1}}=w_{k-1}(\xi)^{j}
	\]
	if and only if there exists a (unique) $r_0 \in \{1,2, \ldots , t\}$ such that
	\begin{compactitem}
 		\item $\lambda_r=0$ if and only if $r\neq r_0$, and
 		\item $j_{r_0,1}=\cdots=j_{r_0,k-2}=0,\,j_{r_0,k-1}=j$.
	\end{compactitem}
\end{lemma}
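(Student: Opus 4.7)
The plan is to reduce the statement to a computation in a polynomial ring, by restricting along an elementary abelian subgroup $V=(\Z/2)^t\hookrightarrow\Sym_k$ given by the regular representation of $V$ (recall $k=2^t$). Under the induced covering map $\conf(\R^{2m},k)/V\longrightarrow\conf(\R^{2m},k)/\Sym_k$, the representation $W_k$ restricts to $\bigoplus_{\chi\neq 1}L_\chi$, a direct sum over the $k-1$ nontrivial characters of $V$. Consequently, $\xi$ pulls back to the direct sum of the $k-1$ associated line bundles, and $w_s(\xi)$ pulls back to the $s$-th elementary symmetric polynomial in their first Stiefel--Whitney classes $\alpha_1,\ldots,\alpha_{k-1}$.

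Next I would track the images of the $\alpha_\chi$ inside $H^*(\conf(\R^{2m},k)/V;\F_2)$. Each $\alpha_\chi$ is the pullback along the classifying map $\conf(\R^{2m},k)/V\to BV$ of the nonzero linear form on $H^1(BV;\F_2)=\F_2\{y_1,\ldots,y_t\}$ corresponding to $\chi$. Thus the $\alpha_\chi$ run through all $k-1$ nonzero linear forms in $y_1,\ldots,y_t$, and by Dickson's theorem their elementary symmetric polynomials are algebraically independent generators of the Dickson invariant subalgebra of $\F_2[y_1,\ldots,y_t]$. In particular, distinct monomials in the variables $w_s(\xi)$ restrict to linearly independent elements of $\F_2[y_1,\ldots,y_t]$.

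To finish, I would combine this with the already-cited fact $w_{k-1}(\xi)^j\neq 0 \Leftrightarrow 0\le j\le 2m-1$ from \cite[Lem.\,8.14]{Blagojevic2105} and the Serre spectral sequence of the Borel construction $\conf(\R^{2m},k)\times_V EV\to BV$ (comparing it to the $\Sym_k$-version). Concretely, for $j\le 2m-1$ one shows that the restriction map from $H^{(k-1)j}(\conf(\R^{2m},k)/\Sym_k;\F_2)$ to $\F_2[y_1,\ldots,y_t]$ is injective on the subspace spanned by monomials $\prod_s w_s(\xi)^{j_{r,s}}$ of total cohomological degree $(k-1)j$. Together with Dickson independence, this forces $w_{k-1}(\xi)^j$ to appear on the left-hand side as its own summand, giving the unique $r_0$ and completing the equivalence.

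The main obstacle is precisely this last injectivity step. Dickson's theorem delivers clean polynomial independence on the classifying space side, but the configuration space $\conf(\R^{2m},k)$ truncates cohomology above a certain degree, and one must rule out accidental cohomological relations among monomials of degree $(k-1)j$ arising from this truncation when $j\le 2m-1$. Handling this carefully, presumably by a filtration argument along the Borel fibration together with the explicit description of $H^*(\conf(\R^{2m},k);\F_2)$ as a $\Sym_k$-module, is the technical core of the argument and is where the sharpness of the bound $j\leq 2m-1$ becomes essential.
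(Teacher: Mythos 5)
Your overall strategy---restricting along the regular elementary abelian subgroup $V=(\Z/2)^t\subset\Sym_k$, $k=2^t$, and invoking Dickson invariants---is in the spirit of the source the paper actually relies on: the paper gives no proof of this lemma but imports it from \cite[Cor.\,2.16]{Blagojevic2016}, whose argument is carried out in exactly that setting. However, your sketch contains a false key claim. The restriction of $w_s(\xi)$ to $\conf(\R^{2m},k)/V$ is the $s$-th elementary symmetric polynomial in all $k-1$ nonzero linear forms on $V$, and by the identity $\prod_{\chi\in V^*}(X+\chi)=X^{2^t}+\sum_{i}q_{t,i}X^{2^i}$ this vanishes unless $s=2^t-2^i$; only for those $s$ does one obtain the (algebraically independent) Dickson generators $q_{t,i}$. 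So for $k\geq 4$ the classes $w_s(\xi)$ with $s\notin\{k-2^i\}$ (for instance $w_1(\xi)$) restrict to zero, the $2^t-1$ elementary symmetric polynomials are certainly not algebraically independent, and distinct monomials do not restrict to linearly independent elements of $\F_2[y_1,\ldots,y_t]$. Restriction to this single subgroup therefore cannot separate monomials containing such factors, and the full linear-independence statement your plan aims at is not reachable this way; indeed such monomials can genuinely vanish upstairs (for $k=4$ one checks by detection on the two conjugacy classes of maximal elementary abelian subgroups of $\Sym_4$ that $w_1(\xi)w_3(\xi)=0$, so e.g.\ the weighted-degree-$6$ monomial $w_1(\xi)^3w_3(\xi)$ is zero). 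Any correct argument must handle these degenerate monomials separately; what the restriction-to-$V$ method can deliver, and what the application in the proof of Theorem \ref{th : main 0} actually needs, is that $w_{k-1}(\xi)^j$ is not an $\F_2$-combination of the other monomials of the same degree.

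Second, the step you yourself flag as the main obstacle---controlling the kernel of the map from the Dickson algebra to $H^*(\conf(\R^{2m},k)/V;\F_2)$, i.e.\ which Dickson monomials survive the truncation in degrees up to $(k-1)(2m-1)$ and whether new relations appear among them---is not a routine verification to be ``handled carefully''; it is precisely the content of the cited results \cite[Lem.\,8.14]{Blagojevic2105} and \cite[Cor.\,2.16]{Blagojevic2016}, and it is where essentially all of the work of the lemma lies (the bound $j\leq 2m-1$ enters exactly there). Since your proposal both rests on the incorrect independence claim above and defers this computation, it does not yet constitute a proof; to complete it you would need to (a) restrict attention to the surviving classes $w_{k-2^i}(\xi)$ mapping to Dickson generators, (b) prove the non-membership statement for $q_{t,0}^j$ in the truncated image, and (c) deal separately with monomials killed by the restriction, e.g.\ by detection on the full family of elementary abelian subgroups rather than the single regular one.
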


\subsubsection{~}
Second, consider the vector bundle $\eta=E\big(p_1^*(f^*\tau N\oplus(-\tau M))\big)/\Sym_k$.
The projection map $p_1\colon M\times \conf(\R^{2m},k)\longrightarrow M$ factors as follows
\[
M\times \conf(\R^{2m},k)\overset{\pi_2}{\longrightarrow} M\times \conf(\R^{2m},k)/\Sym_k\overset{\pi_1}{\longrightarrow} M.
\]
Thus, $\eta\cong\pi_1^*(f^*\tau N\oplus(-\tau M)))$.
Again, the naturality property for the Stiefel--Whitney classes implies that
\begin{equation}
	\label{formula-5}
	w_{i}(\eta)=\pi_1^*(w_i(f^*\tau N\oplus(-\tau M)))= w_{i}(f^*\tau N\oplus(-\tau M))\otimes_{\F_2} 1,
\end{equation}
for every integer $i\geq 0$.
According to one of theorem's assumptions
\begin{equation}
	\label{formula-6}
u_r(f^*\tau N\oplus(-\tau M))=\det (w_{m+n-r-i+j}(f^*\tau N\oplus(-\tau M)))_{1\leq i,j\leq k-1}\neq 0.
\end{equation}

\subsubsection{~}
Now we compute $w=w_{(m+n)(k-1)}(\eta\otimes_{\R}\xi)$.
For this we use the following known formula \cite[Thm.\,1]{Thomas1959}, \cite[Pr.\,7-C]{Milnor1974} for the total Stiefel--Whitney class  of the tensor product of vector bundles
\begin{equation}
	\label{formula-7}
	w(\eta\otimes_{\R}\xi)=P(w_1(\eta),\ldots,w_{m+n}(\eta),w_1(\xi),\ldots,w_{k-1}(\xi)).
\end{equation}
Here $P$ denotes the polynomial 
\[
P(\sigma_1,\ldots,\sigma_{m+n},\sigma'_1,\ldots,\sigma'_{k-1})=\prod_{i=1}^{m+n} \prod_{j=1}^{k-1}(1+a_i+b_j)
\]
that belongs to the ring of symmetric polynomials
\[
\F_2[\sigma_1,\ldots,\sigma_{m+n},\sigma'_1,\ldots,\sigma'_{k-1}]=\F_2[a_1,\ldots,a_{m+n},b_1,\ldots,b_{k-1}]^{\Sym_{m+n}\times \Sym_{k-1}}.
\]
Here $\sigma_1,\ldots,\sigma_{m+n}$ stand for the elementary symmetric polynomials in variables $a_1,\ldots,a_{m+n}$, and $\sigma'_1,\ldots,\sigma'_{k-1}$ are the elementary symmetric polynomials in variables $b_1,\ldots,b_{k-1}$.

In order to compute the class $w$ we will identify the $((m+n)(k-1))$-homogenous part of the polynomial $P$ expressed in terms of elementary symmetric polynomials $\sigma_1,\ldots,\sigma_{m+n}$ and $\sigma'_1,\ldots,\sigma'_{k-1}$ that correspond to the Stiefel--Whithey classes of $\eta$ and $\xi$.

\subsubsection{~}
\label{subsec : polynomial}
The $((m+n)(k-1))$-homogenous part $P_{(m+n)(k-1)}$ of the polynomial $P$, that computes the Stiefel--Whitney class $w_{(m+n)(k-1)}(\eta\otimes_{\R}\xi)$, can be expressed in the following form
\begin{equation}
\label{formula-7.1}
P_{(m+n)(k-1)} = \prod_{i=1}^{m+n} \prod_{j=1}^{k-1}(a_i+b_j).	
\end{equation}
According to a dual version of the Cauchy identity \cite[Eq.\,(0.11')]{Macdonald1992} 
\begin{equation}
\label{formula-8}
P_{(m+n)(k-1)} = \sum_{\lambda}s_{\lambda}(a) s_{\widehat{\lambda}'}(b),	
\end{equation}
where
\begin{compactitem}[\ \ $\circ$]
\item $\lambda=(\lambda_1,\ldots,\lambda_{m+n})$ is a partition, that means $\lambda_1\geq\cdots\geq\lambda_{m+n}\geq 0$,
\item $\lambda_1\leq k-1$,
\item $\widehat{\lambda}=(k-1-\lambda_{m+n},k-1-\lambda_{m+n-1},\ldots,k-1-\lambda_1)$,
\item $\widehat{\lambda}'$ is the conjugate partition of $\widehat{\lambda}$, and
\item $s_{\lambda}(a)=s_{\lambda}(a_1,\ldots,a_{m+n})$ is the Schur function associated to the partition $\lambda$.
\end{compactitem}
The Schur function $s_{\lambda}(a)$ is a symmetric polynomial in $a_1,\ldots,a_{m+n}$ and is defined by:
\[
s_{\lambda}(a_1,\ldots,a_{m+n})=\frac{\det \big(a_i^{\lambda_j+m+n-j}\big)_{1\leq i,j\leq m+n}}{\det \big(a_i^{m+n-j}\big)_{1\leq i,j\leq m+n}}.
\]
On the other hand, the N\"agelsback--Kosta formula \cite[Eq.\,(0.3)]{Macdonald1992} gives a presentation of the Schur function $s_{\lambda}(a)$ in terms of elementary symmetric functions $\sigma_1,\ldots,\sigma_{m+n}$ as follows:
\begin{eqnarray}
\label{formula_NK}
s_{\lambda}(a_1,\ldots,a_{m+n}) &=& \det\big( \sigma_{\lambda_i'-i+j} \big)_{1\leq i,j\leq t}\nonumber	\\
&=& \left|{\small
\begin{array} {lllll}
	\sigma_{\lambda_1'}  & \sigma_{\lambda_1'+1} & \sigma_{\lambda_1'+2} & \cdots & \sigma_{\lambda_1'+t-1}\\
	\sigma_{\lambda_2'-1}  & \sigma_{\lambda_2'} & \sigma_{\lambda_2'+1} & \cdots & \sigma_{\lambda_2'+t-2}\\
	  \cdots &  \cdots & \cdots  & \cdots &  \\
	 \sigma_{\lambda_t'-t+1}  & \sigma_{\lambda_t'-t+2} & \sigma_{\lambda_t'-t+3} & \cdots & \sigma_{\lambda_t'} 
\end{array}}
\right|
\end{eqnarray}
where $t$ is the length of the conjugate partition $\lambda'$.
Here we assume that $\sigma_0=1$, and $\sigma_i=0$ for $i<0$ or $i>m+n$.

\subsubsection{~}
In the next step, having in mind Lemma~\ref{lem : comp}, we want to identify all the Schur functions $s_{\widehat{\lambda}'}(b)$ in the formula \eqref{formula-8} that have a power of the elementary symmetric polynomial $\sigma_{k-1}'$ in their presentation.
Recall that $\sigma_{k-1}'$ corresponds to the Stiefel--Whitney class $w_{k-1}(\xi)$.

From the N\"agelsback--Kosta formula \eqref{formula_NK} the Schur function $s_{\widehat{\lambda}'}(b)$ has a power of $(\sigma_{k-1}')^t$ in its presentation if and only if
\begin{eqnarray*}
\widehat{\lambda}=(\underbrace{k-1,\ldots,k-1}_{t}) &	\Longleftrightarrow &  \lambda=(\underbrace{k-1,\ldots,k-1}_{m+n-t}) \\
&	\Longleftrightarrow & \  \lambda'=(\underbrace{m+n-t,\ldots,m+n-t}_{k-1}).
\end{eqnarray*}
In this case $s_{\widehat{\lambda}'}(b)= (\sigma_{k-1}')^t$, and 
\[
s_{\lambda}(a_1,\ldots,a_{m+n})=\hspace{-2pt} \left|{\small
\begin{array} {lllll}
	\sigma_{m+n-t}  & \sigma_{m+n-t+1} & \sigma_{m+n-t+2} & \cdots & \sigma_{m+n-t+k-2}\\
	\sigma_{m+n-t-1}  & \sigma_{m+n-t} & \sigma_{m+n-t+1} & \cdots & \sigma_{m+n-t+k-3}\\
	  \cdots &  \cdots & \cdots  & \cdots &  \\
	 \sigma_{m+n-t-k+2}  & \sigma_{m+n-t-k+3} & \sigma_{m+n-t-k+4} & \cdots &\sigma_{m+n-t} 
\end{array}}
\right|.
\]
Thus, we have the following presentation:
\begin{multline}
\label{formula-8.2}
P_{(m+n)(k-1)}
= \prod_{i=1}^{m+n} \prod_{j=1}^{k-1}(a_i+b_j)	= \sum_{\lambda}s_{\lambda}(a) s_{\widehat{\lambda}'}(b)\\
= \sum_{t=0}^{m+n} \left|{\small
\begin{array} {llll}
	\sigma_{m+n-t}  & \sigma_{m+n-t+1} &  \cdots & \sigma_{m+n-t+k-2}\\
	\sigma_{m+n-t-1}  & \sigma_{m+n-t} &  \cdots & \sigma_{m+n-t+k-3}\\
	  \cdots &  \cdots  & \cdots &  \\
	 \sigma_{m+n-t-k+2}  & \sigma_{m+n-t-k+3}  & \cdots &\sigma_{m+n-t} 
\end{array}}
\right|(\sigma'_{k-1})^t + \sum_{j\in J}\alpha_j\beta_j,	
\end{multline}
where $\alpha_j\in \F_2[\sigma_1,\ldots,\sigma_m]$ and $\beta_j\in \F_2[\sigma'_1,\ldots,\sigma'_{k-1}]$ are monomials, and {\bf no} $\beta_j$ is a power of $\sigma'_{k-1}$.
Now combining \eqref{formula-7} and \eqref{formula-8.2} we get that
\begin{multline}
\label{formula-9}
	w_{(m+n)(k-1)}(\eta\otimes_{\R}\xi) = \\
	\sum_{t=0}^{m+n}  
	\left|{\small 
\begin{array} {llll}
	w_{m+n-t}(\eta)  & w_{m+n-t+1}(\eta) &  \cdots & w_{m+n-t+k-2}(\eta)\\
	w_{m+n-t-1}(\eta)  & w_{m+n-t}(\eta) &  \cdots & w_{m+n-t+k-3}(\eta)\\
	  \cdots &  \cdots  & \cdots &  \\
	w_{m+n-t-k+2}(\eta)  & w_{m+n-t-k+3}(\eta)  & \cdots & w_{m+n-t}(\eta) 
\end{array}}
\right|
 w_{k-1}(\xi)^t +\\
  \sum_{j\in J}\alpha_j \beta_j,
\end{multline}
where $\alpha_j\in \F_2[w_1(\eta),\ldots,w_m(\eta)]$ and $\beta_j\in \F_2[w_1(\xi),\ldots,w_{k-1}(\xi)]$ are non-constant monomials, and {\bf no} $\beta_j$ is a power of $w_{k-1}(\xi)$.

Let us introduce notation
\[
u_t(\alpha):=
\left|{\small 
\begin{array} {llll}
	w_{m+n-t}(\alpha)  & w_{m+n-t+1}(\alpha) &  \cdots & w_{m+n-t+k-2}(\alpha)\\
	w_{m+n-t-1}(\alpha)  & w_{m+n-t}(\alpha) &  \cdots & w_{m+n-t+k-3}(\alpha)\\
	  \cdots &  \cdots  & \cdots &  \\
	w_{m+n-t-k+2}(\alpha)  & w_{m+n-t-k+3}(\alpha)  & \cdots & w_{m+n-t}(\alpha) 
\end{array}}
\right|
,
\]
where $\alpha$ is an $(m+n)$-dimensional vector bundle.
It is important to keep in mind that $\deg (u_t)=(m+n-t)(k-1)$.
Then the formula \eqref{formula-9} becomes:
\begin{equation}
\label{formula-9-1}
	w_{(m+n)(k-1)}(\eta\otimes_{\R}\xi) = 
	\sum_{t=0}^{m+n}  
	u_t(\eta) w_{k-1}(\xi)^t +
  \sum_{j\in J}\alpha_j \beta_j,	
\end{equation}
where $\alpha_j\in \F_2[w_1(\eta),\ldots,w_m(\eta)]$ and $\beta_j\in \F_2[w_1(\xi),\ldots,w_{k-1}(\xi)]$ are non-constant monomials, and {\bf no} $\beta_j$ is a power of $w_{k-1}(\xi)$.

\subsubsection{~}
Now, having in mind \eqref{formula-2} and \eqref{formula-5} we transform expression \eqref{formula-9-1} as follows:
\begin{equation}
\label{formula-10}
	w_{(m+n)(k-1)}(\eta\otimes_{\R}\xi) = 
	\sum_{t=0}^{m+n}  
	u_t(f^*\tau N\oplus(-\tau M)) \otimes_{\F_2} w_{k-1}(\zeta)^t +
  \sum_{j\in J}\alpha_j' \otimes_{\F_2} \beta_j',	
\end{equation}
where 
\[
\alpha_j=\alpha'_j\otimes_{\F_2} 1
\qquad\text{and}\qquad
\beta_j=1\otimes_{\F_2}\beta'_j,
\]
for some $\alpha'_j\in H^{\geq 1}(M;\F_2)$, and some $\beta'_j\in H^{\geq 1}(\conf(\R^{2m},k)/\Sym_k;\F_2)$ not a power of $w_{k-1}(\zeta)$.

Next, recall that by an assumption of the theorem there exists $r\leq 2m-1$ with the property 
\[
u_r(f^*\tau N\oplus(-\tau M))=\det (w_{\dim M+\dim N -r-i+j}(f^*\tau N\oplus(-\tau M)))_{1\leq i,j\leq k-1}\neq 0.
\]
Hence, consider the projection induced by the K\"unneth formula decomposition
\begin{multline*}
\Lambda\colon H^{(m+n)(k-1)}(M\times \conf(\R^{2m},k)/\Sym_k;\F_2) \longrightarrow \\
H^{(m+n-r)(k-1)}(M;\F_2)\otimes_{\F_2} H^{r(k-1)}( \conf(\R^{2m},k)/\Sym_k;\F_2).	
\end{multline*}
Then from \eqref{formula-10} we get that
\begin{equation}
	\label{formula-12}
	\Lambda(w) =  u_r(f^*\tau N\oplus(-\tau M)) \otimes_{\F_2} w_{k-1}(\zeta)^{r} + \sum_{j\in J''}\alpha''_j\otimes_{\F_2}  \beta''_j\end{equation}
where $\deg \alpha''_j =(m+n-r)(k-1)$, $\deg \beta''_j= r(k-1)$ for $r\geq 1$, and $\beta''_j=0$ for $r=0$.
Moreover, {\bf no} $\beta''_j$ is equal to $w_{k-1}(\zeta)^{r}$.

Since $u_r(f^*\tau N\oplus(-\tau M))\neq 0$ and by \eqref{formula-4} we have $w_{k-1}(\zeta)^{r}\neq 0$ (because $r\leq 2m-1$), we have that the first summand in the formula \eqref{formula-12} does not vanish.
On the other hand by \cite[Cor.\,2.16]{Blagojevic2016} we get that first and second summand in \eqref{formula-12} do not coincide, or since we are working with coefficients in $\F_2$, they do no cancel.
Thus, $\Lambda(w)\neq 0$ and consequently the mod $2$ Euler class $w$ of the vector bundle $\eta\otimes_{\R}\xi$ does not vanish.
This concludes the proof of the theorem.

\subsection{Proof of Theorem \ref{th : main 0-C}}
Let us fix a continuous map $f\colon M\longrightarrow N$ and assume that:
\begin{compactitem}[\ \ $\circ$]
	\item  $k\geq 2$ is an odd prime,
	\item  $M$ is a compact $m$-dimensional smooth almost complex manifold,
	\item $-\tau M$ is an $m'$-dimensional  complex vector bundle,
	\item  $N$ is an $n$-dimensional smooth complex manifold,
	\item  $r\leq m+m'-1$, and 
	\item $v_r(f^*\tau N\oplus(-\tau M))=\det (c_{m'+n-r-i+j}(f^*\tau N\oplus(-\tau M)))_{1\leq i,j\leq k-1}\neq 0$.
\end{compactitem}
Here we put $r = m+m'-1-s$. Furthermore, the Chern classes we work with in this proof are considered mod $k$. Since $M$ and $N$ are smooth manifolds they can be equipped with a Riemannian metric in such a way that corresponding injectivity radii are positive, see  \cite{Green1978}.

From Theorem \ref{th : criterion for continuous maps-C} we have that the continuous map $f$ admits a local $k$-multiplicity if the complex vector bundle
\begin{multline}
\label{eq : bundle-01-C}
E\big(p_1^*((f^*\tau N\oplus(-\tau M))\otimes_{\C}(\underline{W_k}\otimes_{\R} \C)\big)/\Sym_k\cong \\
E\big(p_1^*(f^*\tau N\oplus(-\tau M))\big)/\Sym_k\otimes_{\C} E(p_1^*(\underline{W_k}\otimes_{\R} \C))/\Sym_k \longrightarrow  \\ 
M\times\conf(\C^{\dim\tau M+\dim(-\tau M)},k)/\Sym_k	
\end{multline}
does not admit a nowhere zero section.
It suffices to prove that the Euler class, or the top Chern class of the complex vector bundle \eqref{eq : bundle-01-C} does not vanish.

Since the inverse bundle $-\tau M$ is an $m'$-dimensional complex vector bundle the bundle \eqref{eq : bundle-01-C} becomes
\begin{multline}
\label{eq : bundle-02-C}
E\big(p_1^*(f^*\tau N\oplus(-\tau M))\big)/\Sym_k\otimes_{\C} E(p_1^*(\underline{W_k}\otimes_{\R} \C))/\Sym_k \longrightarrow  \\ 
M\times\conf(\C^{m+m'},k)/\Sym_k.
\end{multline}
To prove the theorem {\em we will demonstrate that the mod $k$ Euler class, which in this case coincides with the $((m'+n)(k-1))$-st Chern class of the complex vector bundle \eqref{eq : bundle-02-C} does not vanish}.

We simplify notation again by denoting:
\[
\xi=E(p_1^*(\underline{W_k}\otimes_{\R}\C))/\Sym_k
\qquad\text{and}\qquad
\eta=E\big(p_1^*(f^*\tau N\oplus(-\tau M))\big)/\Sym_k.
\]
Thus we need to compute the following the mod $k$ Chern class
\[
c:=c_{(m'+n)(k-1)}(\eta\otimes_{\C}\xi)
\]
of the complex vector bundle $\eta\otimes_{\C}\xi$.
The class $c$ belongs to the following cohomology group that decomposes into the direct sum by the K\"unneth formula \cite[Thm.\,VI.1.6]{Bredon1993}:
\begin{multline}
\label{eq : Kunneth-C}
H^{2(m'+n)(k-1)}(M\times \conf(\C^{m+m'},k)/\Sym_k;\F_k)\cong \\
\bigoplus_{i=0}^{2(m'+n)(k-1)}H^{i}(M;\F_k)\otimes_{\F_k} H^{2(m'+n)(k-1)-i}( \conf(\C^{m+m'},k)/\Sym_k;\F_k).
\end{multline}

\noindent
The computation of the Chern class $c$ will be done in steps.

\subsubsection{~}
First we analyze the complex vector bundle $\xi=E(p_1^*(\underline{W_k}\otimes_{\R}\C))/\Sym_k$.
Consider the complex vector bundle
\[
 \zeta \quad\colon\quad  W_k\otimes_{\R}\C   \longrightarrow   \conf(\C^{m+m'},k)\times_{\Sym_k}( W_k\otimes_{\R}\C)   \longrightarrow   \conf(\C^{m+m'},k)/\Sym_k,
\]
and the projection map 
\[
p_2 \colon M\times  \conf(\C^{m+m'},k)/\Sym_k \longrightarrow  \conf(\C^{m+m'},k)/\Sym_k.
\]
There is an isomorphism of complex vector bundles $\xi\cong p_2^{*}\zeta$.
The naturality property of the Chern classes \cite[Lem.\,14.2]{Milnor1974} and the fact that $p_2$ is a projection onto the second factor imply that
\begin{equation}
	\label{formula-2-C}
	c_{i}(\xi) = p_2^*(c_{i} (\zeta))=1\otimes_{\F_k} c_{i} (\zeta)
	 \in H^{0}(M;\F_k)\otimes_{\F_k} H^{i}( \conf(\C^{m+m'},k)/\Sym_k;\F_k),
\end{equation}
for any integer $i\geq 0$.

Next we recall some know fact about the cohomology of the unordered configuration space $\conf(\C^{m+m'},k)$ with $\F_k$ coefficients, consult for example \cite[Prop.\,5.1(iii) and Thm.\,5.2]{Cohen1976}

\begin{lemma}
\label{lem : cohomology with F_p coefficients}
Let $k$ be an odd prime, and let $m\ge1$ be an integer 
\begin{compactenum}[\rm (1)]
	\item 
	\label{prop:cohomology_of_S_p}
	 $H^*(\Sym_k;\F_k)\cong \Lambda[e]\otimes \F_k[x]$, where $\Lambda(\cdot)$ denotes the exterior algebra, $e$ is a class of degree $2k-3$ and $x$ is the Bockstein of $e$, and so a class of degree $2k-2$.
	 \item There is a monomorphism of algebras
		\[
			h^*\colon H^{\leq (2m+2m'-1)(k-1)}(\Sym_k;\F_k) \longrightarrow H^*(\conf(\C^{m+m'},k)/\Sym_k;\F_k),
		\]
	induced by the classifying map $h\colon \conf(\C^{m+m'},k)/\Sym_k\longrightarrow \B 	\Sym_k$.	
\end{compactenum}
\end{lemma}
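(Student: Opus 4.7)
The plan is to treat the two statements separately: part (1) is a classical invariant-theoretic computation of $H^*(\Sym_k;\F_k)$ for $k$ prime, while part (2) is a combination of a cohomological dimension bound with a non-vanishing result about the top Chern class of the bundle $\zeta$, completely parallel to the real-case argument already carried out in the paper.

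For part (1), since $k$ is prime, the Sylow $k$-subgroup $P$ of $\Sym_k$ is cyclic of order $k$, generated by a $k$-cycle, and its normalizer $N_{\Sym_k}(P)\cong \Z/k\rtimes (\Z/k)^{\ast}$ is a Frobenius group of order $k(k-1)$, whose index in $\Sym_k$ is coprime to $k$. By Swan's theorem (equivalently, a transfer argument), restriction identifies
\[
H^{*}(\Sym_k;\F_k) \;\cong\; H^{*}(P;\F_k)^{(\Z/k)^{\ast}}.
\]
Writing $H^{*}(P;\F_k)\cong \Lambda[\alpha]\otimes\F_k[\beta]$ with $|\alpha|=1$, $|\beta|=2$, $\beta=\delta\alpha$, the action of $(\Z/k)^{\ast}$ is scalar multiplication on both $\alpha$ and $\beta$. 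A monomial $\alpha^{\epsilon}\beta^{j}$ is invariant iff $\epsilon+j\equiv 0\pmod{k-1}$, so the ring of invariants is generated by $e:=\alpha\beta^{k-2}$ of degree $2k-3$ and $x:=\beta^{k-1}$ of degree $2k-2$. A direct Leibniz computation gives $\delta e=x$ and $e^{2}=0$, yielding exactly $\Lambda[e]\otimes\F_k[x]$.

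For part (2), I would first establish the cohomological dimension bound. By iterating the Fadell--Neuwirth fibration $\conf(\C^{m+m'},j+1)\to \conf(\C^{m+m'},j)$, whose fiber $\C^{m+m'}\setminus\{j\text{ points}\}$ is homotopy equivalent to a wedge of spheres of dimension $2(m+m')-1$, one sees that $\conf(\C^{m+m'},k)$ is homotopy equivalent to a finite CW complex of dimension at most $(2m+2m'-1)(k-1)$; since $\Sym_k$ acts freely, the quotient inherits this dimension bound. Thus the codomain of $h^{*}$ vanishes above the stated degree, and it suffices to show that $h^{*}$ does not kill any of the monomial basis elements $x^{j}$ (for $0\le j\le m+m'-1$) and $ex^{j}$ (for $0\le j\le m+m'-2$) in that range; since these basis elements lie in pairwise distinct degrees, non-vanishing of each one individually suffices.

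To verify this non-vanishing, the plan is to identify $h^{*}(x)$ with a non-zero scalar multiple of the top Chern class $c_{k-1}(\zeta)$---noting that $\zeta$ is the pullback along $h$ of the universal bundle $\E\Sym_k\times_{\Sym_k}(W_k\otimes_{\R}\C)\to \B\Sym_k$, whose mod-$k$ Euler class is a non-zero multiple of $x$---and similarly to identify $h^{*}(e)$ with the associated odd-degree characteristic class via $\delta$-naturality. Non-vanishing of $c_{k-1}(\zeta)^{j}$ in the stated range is then proved by an inductive Leray--Hirsch argument along the Fadell--Neuwirth tower, entirely parallel to the mod-$2$ argument used for $w_{k-1}(\zeta)$ in \cite[Cor.\,2.16]{Blagojevic2016}, with the inputs from \cite[Prop.\,5.1(iii) and Thm.\,5.2]{Cohen1976} supplying the structure of $H^{*}(\conf(\C^{m+m'},k)/\Sym_k;\F_k)$. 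I expect this last bookkeeping step to be the main technical obstacle, but it is a direct odd-prime/complex analog of the real mod-$2$ computation the paper has already executed.
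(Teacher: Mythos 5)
Your part (1) is fine: the reduction to the normalizer of a Sylow $k$-subgroup and the computation of the $(\Z/k)^{\ast}$-invariants of $\Lambda[\alpha]\otimes\F_k[\beta]$ is the standard argument and gives exactly $\Lambda[e]\otimes\F_k[x]$ with $|e|=2k-3$, $x=\delta e$. Note, however, that the paper does not prove this lemma at all: it is quoted from Cohen \cite[Prop.\,5.1(iii), Thm.\,5.2]{Cohen1976} (and the height statement \eqref{formula-4-C} for $c_{k-1}(\zeta)$ from \cite[Thm.\,4.1]{Blagojevic2015-02}), so what you are attempting is a re-derivation of those inputs.

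For part (2) there is a genuine gap, and it sits exactly at the heart of the statement. You correctly reduce injectivity of $h^*$ in degrees $\leq(2m+2m'-1)(k-1)$ to the non-vanishing of $h^*(x)^j=c_{k-1}(\zeta)^j$ for $j\leq m+m'-1$ (the odd classes then follow from Bockstein naturality, since $\delta(ex^j)=x^{j+1}$), but you then defer precisely this non-vanishing to an ``inductive Leray--Hirsch argument along the Fadell--Neuwirth tower, parallel to the mod $2$ case.'' That mechanism does not work as described. The Fadell--Neuwirth fibrations live on the \emph{ordered} configuration space and do not descend to the $\Sym_k$-quotient; with $\F_k$-coefficients you also cannot pass between the ordered and unordered spaces by transfer, since $k$ divides $|\Sym_k|$. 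Worse, every positive-degree class of $\B\Sym_k$ pulls back to zero on the ordered space $\conf(\C^{m+m'},k)$ (the composite $\conf(\C^{m+m'},k)\to\conf(\C^{m+m'},k)/\Sym_k\to\B\Sym_k$ is null-homotopic, being the total space of the classified principal bundle), so in particular $x^j$ is invisible upstairs and no computation along the ordered tower can certify that $h^*(x)^j\neq 0$ downstairs. The known proofs of this height statement are either Cohen's explicit mod $k$ computation of $H^*(\conf(\R^{2(m+m')},k)/\Sym_k;\F_k)$ --- which is exactly the citation you would be importing anyway, at which point the lemma is immediate and your extra scaffolding is redundant --- or a genuinely equivariant argument as in \cite{Blagojevic2015-02}. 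A secondary, smaller issue: your dimension bound for the quotient does not follow from the non-equivariant Fadell--Neuwirth equivalence plus freeness of the action; you would need the $\Sym_k$-equivariant homotopy equivalence to a complex of dimension $(2(m+m')-1)(k-1)$ (as in \cite{Blagojevic2105}). Since that bound is not needed for injectivity, this flaw is harmless, but as stated the inference is invalid.
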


Next, consider the complex vector bundle
\[
  \mu \quad\colon\quad  W_k\otimes_{\R}\C   \longrightarrow  \E \Sym_k \times_{\Sym_k}( W_k\otimes_{\R}\C)   \longrightarrow   \B \Sym_k.
\]
It is known that $c(\mu)=1+x$, for suitable choice of generator $x$ in the cohomology of the symmetric group.
That meaning $c_i(\mu)\neq 0$ if and only if $i\in \{0,k-1\}$, and moreover $c_{k-1}=x$.
Since $\zeta=h^*\mu$ using Lemma \ref{lem : cohomology with F_p coefficients} and naturality property of the Chern classes we get 
\begin{equation}
	\label{formula-3-C}
	c(\zeta)=1+h^*(x).
\end{equation}
Furthermore, by \cite[Thm.\,4.1]{Blagojevic2015-02} we have that
\begin{equation}
	\label{formula-4-C}
	c_{k-1}(\zeta)^{j}\neq 0 \quad\qquad\text{if and only if}\quad\qquad 0\leq j\leq m+m'-1. 
\end{equation}

\subsubsection{~}

Now we consider the complex vector bundle $\eta=E\big(p_1^*(f^*\tau N\oplus(-\tau M))\big)/\Sym_k$.
The projection map $p_1\colon M\times \conf(\C^{m+m'},k)\longrightarrow M$ factors as follows
\[
M\times \conf(\C^{m+m'},k)\overset{\pi_2}{\longrightarrow} M\times \conf(\C^{m+m'},k)/\Sym_k\overset{\pi_1}{\longrightarrow} M,
\]
and therefore $\eta\cong\pi_1^*(f^*\tau N\oplus(-\tau M)))$.
The naturality property for the Chern classes yields
\begin{equation}
	\label{formula-5-C}
	c_{i}(\eta)=\pi_1^*(c_i(f^*\tau N\oplus(-\tau M)))= c_{i}(f^*\tau N\oplus(-\tau M))\otimes_{\F_k} 1,
\end{equation}
for every integer $i\geq 0$.
According to one of theorem's assumptions
\begin{equation}
	\label{formula-6-C}
v_r(f^*\tau N\oplus(-\tau M))=\det (c_{m'+n-r-i+j}(f^*\tau N\oplus(-\tau M)))_{1\leq i,j\leq k-1}\neq 0.
\end{equation}

\subsubsection{~}
Next we make the first step in computation of $c=c_{(m'+n)(k-1)}(\eta\otimes_{\C}\xi)$.
For this we use the following known formula for the total Chern class  of the tensor product of vector bundles \cite[p.\,67]{Macdonald1995-Book}, \cite[Eq.\,(21.9)]{Bott1982}  we get
\begin{equation}
	\label{formula-7-C}
	c(\eta\otimes_{\R}\xi)=P(c_1(\eta),\ldots,c_{m+n}(\eta),c_1(\xi),\ldots,c_{k-1}(\xi)).
\end{equation}
Here $P$ is the same polynomial as in Section \ref{subsec : polynomial}:
\[
P(\sigma_1,\ldots,\sigma_{m'+n},\sigma'_1,\ldots,\sigma'_{k-1})=\prod_{i=1}^{m'+n} \prod_{j=1}^{k-1}(1+a_i+b_j)
\]
that belongs to the ring of symmetric polynomials
\[
\F_2[\sigma_1,\ldots,\sigma_{m'+n},\sigma'_1,\ldots,\sigma'_{k-1}]=\F_2[a_1,\ldots,a_{m'+n},b_1,\ldots,b_{k-1}]^{\Sym_{m'+n}\times \Sym_{k-1}}.
\]

In order to compute the class $c$ we need to identify the relevant homogenous part of the polynomial $P$.
The elementary symmetric polynomials $\sigma_1,\ldots,\sigma_{m'+n}$ and $\sigma'_1,\ldots,\sigma'_{k-1}$ correspond to the Chern classes of $\eta$ and $\xi$.
The $((m'+n)(k-1))$-homogenous part $P_{(m'+n)(k-1)}$ of the polynomial $P$, that computes the Chern class class $c$, can be expressed as in the proof of Theorem \ref{th : main 0} as follows
\begin{equation}
\label{formula-8-C}
P_{(m'+n)(k-1)} = \prod_{i=1}^{m'+n} \prod_{j=1}^{k-1}(a_i+b_j)=\sum_{\lambda}s_{\lambda}(a) s_{\widehat{\lambda}'}(b),	
\end{equation}
where
\begin{compactitem}[\ \ $\circ$]
\item $\lambda=(\lambda_1,\ldots,\lambda_{m'+n})$ is a partition, that means $\lambda_1\geq\cdots\geq\lambda_{m'+n}\geq 0$,
\item $\lambda_1\leq k-1$,
\item $\widehat{\lambda}=(k-1-\lambda_{m'+n},k-1-\lambda_{m'+n-1},\ldots,k-1-\lambda_1)$,
\item $\widehat{\lambda}'$ is the conjugate partition of $\widehat{\lambda}$, and
\item $s_{\lambda}(a)=s_{\lambda}(a_1,\ldots,a_{m'+n})$ is the Schur function associated to the partition $\lambda$.
\end{compactitem}
The N\"agelsback--Kosta formula \cite[Eq.\,(0.3)]{Macdonald1992} gives a presentation of the Schur function $s_{\lambda}(a)$ in terms of elementary symmetric functions $\sigma_1,\ldots,\sigma_{m'+n}$ as follows:
\begin{eqnarray}
\label{formula_NK-C}
s_{\lambda}(a_1,\ldots,a_{m'+n}) &=& \det\big( \sigma_{\lambda_i'-i+j} \big)_{1\leq i,j\leq t}\nonumber	\\
&=& \left|{\small
\begin{array} {lllll}
	\sigma_{\lambda_1'}  & \sigma_{\lambda_1'+1} & \sigma_{\lambda_1'+2} & \cdots & \sigma_{\lambda_1'+t-1}\\
	\sigma_{\lambda_2'-1}  & \sigma_{\lambda_2'} & \sigma_{\lambda_2'+1} & \cdots & \sigma_{\lambda_2'+t-2}\\
	  \cdots &  \cdots & \cdots  & \cdots &  \\
	 \sigma_{\lambda_t'-t+1}  & \sigma_{\lambda_t'-t+2} & \sigma_{\lambda_t'-t+3} & \cdots & \sigma_{\lambda_t'} 
\end{array}}
\right|
\end{eqnarray}
where $t$ is the length of the conjugate partition $\lambda'$.
Here we assume that $\sigma_0=1$, and $\sigma_i=0$ for $i<0$ or $i>m'+n$.

\subsubsection{~}
In the next step, having in \eqref{formula-2-C}, \eqref{formula-3-C} and \eqref{formula-4-C} , we are going to identify all the Schur functions $s_{\widehat{\lambda}'}(b)$ in the formula \eqref{formula-8-C} that have a power of the elementary symmetric polynomial $\sigma_{k-1}'$ in their presentation.
In this case, the symmetric polynomial $\sigma_{k-1}'$ corresponds to the only non-zero Chern class of positive degree $c_{k-1}(\xi)$. 

As we have already seen in the proof of Theorem \ref{th : main 0} according to the N\"agelsback--Kosta formula \eqref{formula_NK-C} the Schur function $s_{\widehat{\lambda}'}(b)$ has a power of $(\sigma_{k-1}')^t$ in its presentation if and only if
\begin{eqnarray*}
\widehat{\lambda}=(\underbrace{k-1,\ldots,k-1}_{t}) &	\Longleftrightarrow &  \lambda=(\underbrace{k-1,\ldots,k-1}_{m'+n-t}) \\
&	\Longleftrightarrow & \  \lambda'=(\underbrace{m'+n-t,\ldots,m'+n-t}_{k-1}).
\end{eqnarray*}
In this case $s_{\widehat{\lambda}'}(b)= (\sigma_{k-1}')^t$, and 
\[
s_{\lambda}(a_1,\ldots,a_{m'+n})=\hspace{-2pt} \left|{\small
\begin{array} {lllll}
	\sigma_{m'+n-t}  & \sigma_{m'+n-t+1}   & \cdots & \sigma_{m'+n-t+k-2}\\
	\sigma_{m'+n-t-1}  & \sigma_{m'+n-t}   & \cdots & \sigma_{m'+n-t+k-3}\\
	  \cdots &  \cdots    & \cdots & \cdots \\
	 \sigma_{m'+n-t-k+2}  & \sigma_{m'+n-t-k+3}   & \cdots &\sigma_{m'+n-t} 
\end{array}}
\right|.
\]
Since $c_i(\xi)=0$ for $i\notin \{0,k-1\}$, unlike in the proof of Theorem \ref{th : main 0}, we have that all Schur functions $s_{\lambda}(a_1,\ldots,a_{m'+n})$ vanish when 
\[
\lambda\neq (\underbrace{k-1,\ldots,k-1}_{m'+n-t})
\]
for some $t$.
Thus
\begin{multline}
\label{formula-8.2-C}
P_{(m'+n)(k-1)}
= \prod_{i=1}^{m'+n} \prod_{j=1}^{k-1}(a_i+b_j)	= \sum_{\lambda}s_{\lambda}(a) s_{\widehat{\lambda}'}(b)\\
= \sum_{t=0}^{m'+n} \left|{\small
\begin{array} {llll}
	\sigma_{m'+n-t}  & \sigma_{m'+n-t+1} &  \cdots & \sigma_{m'+n-t+k-2}\\
	\sigma_{m'+n-t-1}  & \sigma_{m'+n-t} &  \cdots & \sigma_{m'+n-t+k-3}\\
	  \cdots &  \cdots  & \cdots &  \\
	 \sigma_{m'+n-t-k+2}  & \sigma_{m'+n-t-k+3}  & \cdots &\sigma_{m'+n-t} 
\end{array}}
\right|(\sigma'_{k-1})^t.
\end{multline}

\subsubsection{~}
Finally, by collecting previous computation we have that
\begin{multline}
\label{formula-9-C}
	c=c_{(m'+n)(k-1)}(\eta\otimes_{\C}\xi) = \\
	\sum_{t=0}^{m'+n}  
	\left|{\small 
\begin{array} {llll}
	c_{m'+n-t}(\eta)  & c_{m'+n-t+1}(\eta) &  \cdots & c_{m'+n-t+k-2}(\eta)\\
	c_{m'+n-t-1}(\eta)  & c_{m'+n-t}(\eta) &  \cdots & c_{m'+n-t+k-3}(\eta)\\
	  \cdots &  \cdots  & \cdots &  \\
	c_{m'+n-t-k+2}(\eta)  & c_{m'+n-t-k+3}(\eta)  & \cdots & c_{m'+n-t}(\eta) 
\end{array}}
\right|   c_{k-1}(\xi)^t\\
\hspace{5pt}= \sum_{t=0}^{m'+n}  
	v_t(f^*\tau N\oplus(-\tau M))\otimes_{\F_k} c_{k-1}(\zeta)^t. 
\end{multline}
Observe that each summand in \eqref{formula-9-C} belongs to a different summand in the K\"unneth decomposition \eqref{eq : Kunneth-C} of the ambient group. 
More precisely
\begin{multline*}
	v_t(f^*\tau N\oplus(-\tau M))\otimes_{\F_k} c_{k-1}(\zeta)^t\\
	\in 
H^{2(m'+n-t)(k-1)}(M;\F_k)\otimes_{\F_k} H^{2t(k-1)}( \conf(\C^{m+m'},k)/\Sym_k.\F_k).
\end{multline*}
Hence, if one of the summands $v_t(f^*\tau N\oplus(-\tau M))\otimes_{\F_k} c_{k-1}(\zeta)^t$ does not vanish then the Chern class $c$ will also not vanish.
Since, by the theorem assumption, $v_r(f^*\tau N\oplus(-\tau M))\neq 0$ and $r\leq m+m'-1$, then according to \eqref{formula-4-C}
\[
v_r(f^*\tau N\oplus(-\tau M))\otimes_{\F_k} c_{k-1}(\zeta)^r\neq 0,
\]
and consequently $c\neq 0$. 
Thus, we completed to proof of the theorem.

\subsection{Proof of Theorem \ref{th : main 1}}
The proof of the theorem is obtained by applying Theorem \ref{th : main 0} to an arbitrary continuous map $f\colon M\longrightarrow N$.
We verify that all assumptions necessary for application of Theorem \ref{th : main 0} are met.
Let us denote by $m:=\dim M$, and by $n:=\dim N$.

First, we simplify the vector bundle $f^*\tau N\oplus(-\tau M)$.
The assumption that $N$ is parallelizable implies that the tangent bundle $\tau N$ is trivial, meaning $\tau N\cong \underline{\R^n}$ as a vector bundle over $N$.
Consequently, the pullback bundle $f^*\tau N$ is also a trivial vector bundle, denoted again by $\underline{\R^n}$, but now over $M$.
Thus,
\[
w(f^*\tau N\oplus(-\tau M))=w(\underline{\R^n}\oplus (-\tau M))=w(-\tau M)=\bar{w}(M),
\]
where $\bar{w}(M)$ denotes the total dual Stiefel--Whitney class of the tangent vector bundle $\tau M$.

Since $r:=m-s=\min\{\ell : \bar{w}_{m-\ell}(M)\neq 0\}$ and moreover $\bar{w}_{m-r}(M)^{k-1}\neq 0$ we have that
\begin{eqnarray*}
u_{n+r}(f^*\tau N\oplus(-\tau M)) &=& \\
&=& \hspace{-5pt}
\left|{\small
\begin{array} {llll}
	\bar{w}_{m-r}(M)  & \bar{w}_{m-r+1}(M) &  \cdots & \bar{w}_{m-r+k-2}(M)\\
	\bar{w}_{m-r-1}(M)  & \bar{w}_{m-r}(M) &  \cdots & \bar{w}_{m-r+k-3}(M)\\
	  \cdots &  \cdots  & \cdots &  \\
	\bar{w}_{m-r-k+2}(M)  & \bar{w}_{m-r-k+3}(M)  & \cdots & \bar{w}_{m-r}(M) 
\end{array}}
\right|\\
&=& \hspace{-5pt}
\left|{\small
\begin{array} {llll}
	\bar{w}_{m-r}(M)  & 0 &  \cdots & 0\\
	\bar{w}_{m-r-1}(M)  & \bar{w}_{m-r}(M) &  \cdots & 0\\
	  \cdots &  \cdots  & \cdots &  \\
	\bar{w}_{m-r-k+2}(M)  & \bar{w}_{m-r-k+3}(M)  & \cdots & \bar{w}_{m-r}(M) 
\end{array}}
\right|\\
&= &\bar{w}_{m-r}(M)^{k-1}\neq 0.
\end{eqnarray*}

Finally, assumption that $r\leq 2m-n-1$ implies that 
\[
n+r\leq n + 2m-n-1 = 2m-1.
\]
Now, Theorem \ref{th : main 0} implies that the continuous map $f\colon M\longrightarrow N$, that was chosen arbitrary, admits a $k$-multiplicity.
This concludes the proof of the theorem.

\subsection{Proof of Theorem \ref{th : main 1-C}}

In order to prove the theorem we apply Theorem \ref{th : main 0-C} to an arbitrary continuous map $f\colon M\longrightarrow N$.
We just verify that all assumptions necessary for application of Theorem \ref{th : main 0} are satisfied.
Let $m:=\dim M$, $m'=\dim(-\tau M)$, and $n:=\dim N$.

In this case the vector bundle $f^*\tau N\oplus(-\tau M)$ can be simplified.
Since $N$ is a parallelizable then the tangent bundle $\tau N$ is trivial, meaning $\tau N\cong \underline{\C^n}$ as a vector bundle over $N$.
Consequently, $f^*\tau N$ is also a trivial vector bundle, denoted also by $\underline{\C^n}$, but now as a complex vector bundle over $M$.
Thus,
\[
c(f^*\tau N\oplus(-\tau M))=c(\underline{\C^n}\oplus (-\tau M))=c(-\tau M)=\bar{c}(M),
\]
where $\bar{c}(M)$ denotes the $i$-th Chern class of the inverse of the tangent complex vector bundle $\tau M$.

Since by the theorem assumption $r:=m'-s=\min\{\ell : \bar{c}_{m'-\ell}(M)\neq 0\}$, and moreover $\bar{c}_{m'-r}(M)^{k-1}\neq 0$, we can evaluate the following class 
\begin{eqnarray*}
v_{n+r}(f^*\tau N\oplus(-\tau M)) &=& \\
&=& \hspace{-5pt}
\left|{\small
\begin{array} {llll}
	\bar{c}_{m'-r}(M)  & \bar{c}_{m'-r+1}(M) &  \cdots & \bar{c}_{m'-r+k-2}(M)\\
	\bar{c}_{m'-r-1}(M)  & \bar{c}_{m'-r}(M) &  \cdots & \bar{c}_{m'-r+k-3}(M)\\
	  \cdots &  \cdots  & \cdots &  \\
	\bar{c}_{m'-r-k+2}(M)  & \bar{c}_{m'-r-k+3}(M)  & \cdots & \bar{c}_{m'-r}(M) 
\end{array}}
\right|\\
&=& \hspace{-5pt}
\left|{\small
\begin{array} {llll}
	\bar{c}_{m'-r}(M)  & 0 &  \cdots & 0\\
	\bar{c}_{m'-r-1}(M)  & \bar{c}_{m'-r}(M) &  \cdots & 0\\
	  \cdots &  \cdots  & \cdots &  \\
	\bar{c}_{m'-r-k+2}(M)  & \bar{c}_{m'-r-k+3}(M)  & \cdots & \bar{c}_{m'-r}(M) 
\end{array}}
\right|\\
&= &\bar{c}_{m'-r}(M)^{k-1}\neq 0.
\end{eqnarray*}

In the last step we have that the assumption $r\leq m+m'-n-1$ implies 
\[
n+r\leq n + m+m'-n-1 = m+m'-1.
\]
Hence, Theorem \ref{th : main 0-C} implies that the continuous map $f\colon M\longrightarrow N$, that was chosen arbitrary, admits a $k$-multiplicity.

\subsection{Proof of Corollary \ref{cor : main 2}}
Let $a\geq 1$ and $\ell\geq1$ be integers, let $k\geq 2$ be a power of $2$, and let $k(a+1)\leq 2^{\ell}-1$.
Furthermore, set $m=2^{\ell}-2-a$ and $n=2^{\ell}-2$.

It order to apply Theorem~\ref{th : main 1} we need first to find the integer
\[
r=m-s=\min\{\ell : \bar{w}_{m-\ell}(\RP^m)\neq 0\}.
\]
It is well known that the total Stiefel--Whitney class $w(\RP^m)=(1+t)^{m+1}$, where $H^*(\RP^m;\F_2)=\F_2[t]/\langle t^{m+1}\rangle$ and $\deg(t)=1$, consult for example \cite[Thm.\,4.5]{Milnor1974}.
Then 
\[
\bar{w}(\RP^m)=(1+t)^{2^{\ell}-m-1}=(1+t)^{a+1} = \sum_{i=0}^{a+1}{a+1 \choose i}t^i=1+(a+1)t+\cdots+t^{a+1}.
\]
Since $a+1\leq\tfrac1{k}(2^{\ell}-1)$ and $k\geq 2$ we have that $a+1\leq 2^{\ell-1}-1$.
Consequently, $m=2^{\ell}-2-a\geq 2^{l-1}$, and so $a+1\leq m$.
Thus, $r=m-a-1$.
Furthermore, $\bar{w}_{a+1}(\RP^m)^{k-1}=t^{(a+1)(k-1)}\neq 0$ because 
\[
(a+1)(k-1)=k(a+1)-a-1\leq 2^{\ell}-1-a-1=m.
\]
It remains to confirm that $r\leq 2m-n-1$.
Indeed, the following inequality holds:
\[
m-a-1=r\leq 2m-n-1 = m + 2^{\ell}-2-a -(2^{\ell}-2) - 1 = m-a-1
\]

Now, Theorem~\ref{th : main 1} applied to the manifolds $M=\RP^m$ and $N=\R^n$ concludes the proof of the corollary, meaning that every continuous map $\RP^{2^{\ell}-2-a}\longrightarrow \R^{2^{\ell}-2}$ admits a local $k$-multiplicity.

\subsection{Proof of Corollary \ref{cor : main 2.5}}
Let $a\geq 1$ and $\ell\geq1$ be integers, let $k\geq 2$ be a power of $2$, and let $k(a+1)\leq 2^{\ell}-1$.
Furthermore, set $m=2^{\ell}-2-a$ and $n=2^{\ell}-2$, and fix a continuous map $f\colon\RP^m\longrightarrow S^n$.

It order to apply Theorem~\ref{th : main 0} we need to find an integer $r=2m-1-s\leq 2m-1$ such that
	\[
	u_r(f^*\tau S^n\oplus(-\tau \RP^m)):=\det (w_{m+n-r-i+j})_{1\leq i,j\leq k-1}
	\]
does not vanish. 
Here $w_i:=w_i(f^*\tau S^n\oplus(-\tau \RP^m))$ is the $i$-th Stiefel--Whitney class of the vector bundle $f^*\tau S^n\oplus(-\tau \RP^m)$ for $i\geq 0$, and $w_i=0$ for $i<0$.
Since $w(\tau S^n)=1$ we have that $w(f^*\tau S^n)=1$.
Consequently, 
\[
w(f^*\tau S^n\oplus(-\tau \RP^m))=w(f^*\tau S^n)w(-\tau \RP^m)=w(-\tau \RP^m)=\bar{w}(\RP^m).
\]
As we have seen in the proof of Corollary \ref{cor : main 2} the dual Stiefel--Whitney class of $\RP^m$ is 
\[
\bar{w}(\RP^m)=(1+t)^{2^{\ell}-m-1}=(1+t)^{a+1} = \sum_{i=0}^{a+1}{a+1 \choose i}t^i=1+(a+1)t+\cdots+t^{a+1}.
\]
Again, $a+1\leq\tfrac1{k}(2^{\ell}-1)$ and $k\geq 2$ imply that  $a+1\leq 2^{\ell-1}-1$.
Hence, $m=2^{\ell}-2-a\geq 2^{l-1}$, and so $a+1\leq m$.
Thus, $\bar{w}_{a+1}(\RP^m)=t^{a+1}\neq 0$, and  $\bar{w}_i(\RP^m)=0$ for $i>a+1$.

Now, if we take $r=2^{\ell+1}-2a-5$ then $r=2m-1$ and
\begin{multline*}
	u_r(f^*\tau S^n\oplus(-\tau \RP^m)):=\det (w_{a+1-i+j})_{1\leq i,j\leq k-1}\qquad\qquad \\
	=\det (\bar{w}_{a+1-i+j}(\RP^m))_{1\leq i,j\leq k-1} = \bar{w}_{a+1}(\RP^m)^{k-1}=t^{(a+1)(k-1)}.
\end{multline*}
Since
$
(a+1)(k-1)=k(a+1)-a-1\leq 2^{\ell}-1-a-1=m
$
we have that $t^{(a+1)(k-1)}\neq 0$, and consequently $u_r(f^*\tau S^n\oplus(-\tau \RP^m))\neq 0$.

Thus, Theorem~\ref{th : main 0} applied to the manifolds $M=\RP^m$ and $N=S^n$ concludes the proof of the corollary.
\subsection{Proof of Corollary \ref{cor : main 3}}
Let $a\geq 1$ and $\ell\geq1$ be integers, let $k\geq 2$ be a power of $2$, and let $k(a-1)\leq 2^{\ell}-1$.
Now set $m=2^{\ell}-a$ and $n=2^{\ell+1}-3$.

Again we apply Theorem~\ref{th : main 1}.
First, we weed to find the integer $r=m-s$.
A know fact is that the total Stiefel--Whitney class $w(\CP^m)=(1+x)^{m+1}$, where $H^*(\CP^m;\F_2)=\F_2[x]/\langle x^{m+1}\rangle$ and $\deg(x)=2$, consult \cite[Thm.\,14.10]{Milnor1974}.
Therefore, the total dual Stiefel--Whitney class can be computed as follows 
\[
\bar{w}(\CP^m)=(1+x)^{2^{\ell}-m-1}=(1+x)^{a-1} = \sum_{i=0}^{a-1}{a-1 \choose i}x^i=1+(a-1)x+\cdots+x^{a-1}.
\]
From the assumption $k(a-1)\leq 2^{\ell}-1 \Leftrightarrow (k-1)(c-m-1)\leq m$ we derive that $a-1=2^{\ell}-m-1\leq \tfrac{m}{k-1}\leq m$. 
Thus, $2m-r=2a-2$ and so $r=2m-2a+2$

Next, $\bar{w}_{2a-2}(\CP^m)^{k-1}=x^{(a-1)(k-1)}\neq 0$ because $(k-1)(a-1)\leq m$. 
Finally, we verify that $r\leq 4m-n-1$.
Indeed,
\[
r=2m-2a+2=2m-2(2^{\ell}-m)+2=4m-2^{\ell+1}+2=4m-n-1.
\]

Again, Theorem~\ref{th : main 1} applied to the manifolds $M=\CP^m$ and $N=\R^n$ yields the statement of the corollary, that is 
every continuous map $\CP^{2^{\ell}-a}\longrightarrow \R^{2^{\ell+1}-3}$ admits a local $k$-multiplicity.

\subsection{Proof of Corollary \ref{cor : main 4}}

Let $a\geq 1$ and $\ell\geq1$ be integers, let $k$ be an odd prime, and let $2\leq a\leq\tfrac{k^{\ell}+1}{2}$.
Furthermore, let $m:=k^{\ell}-a$, $m'=\dim (-\tau \CP^m)$, and $n:=k^{\ell}-2$.
We will apply Theorem \ref{th : main 1-C} and therefore we verify that its assumptions are satisfied.

The total Chern class of the projective space is $c(\CP^m)=(1+x)^{m+1}$ where $H^*(\CP^m;\F_k)=\F_k[x]/\langle x^{m+1}\rangle$ and $\deg(x)=2$, see \cite[Thm.\,14.10]{Milnor1974}.
Since $2\leq a\leq\tfrac{k^{\ell}+1}{2}$ we have that
\[
\tfrac12(k^{\ell}-1)< m+1 < k^l 
\qquad\Longrightarrow\qquad
0 < k^{\ell}-m-1 < \tfrac{k^{\ell}+1}{2}\leq m+1.
\]
Therefore, we have 
\[
\bar{c}(\CP^m)=(1+x)^{k^{\ell}-m-1}=(1+x)^{a-1} = \sum_{i=0}^{a-1}{a-1 \choose i}x^i=1+(a-1)x+\cdots+x^{a-1},
\]
where binomial coefficients are considered mod $k$.
Thus $\bar{c}_{a-1}(\CP^m)\neq 0$ and $\bar{c}_{i}(\CP^m)=0$ for all $i>a-1$.
Consequently $m'\geq a-1$ and
\[
r=m'-s= \min\{\ell : \bar{c}_{m'-\ell}(\CP^m)\neq 0\} = m'-a+1.
\]
Since $k(a-1)\leq k^{\ell}-1\ \Longleftrightarrow \ (k-1)(a-1)\leq k^{\ell}-a$ we have that $\bar{c}_{a-1}(\CP^m)^{k-1}\neq 0$

It remains to verify that $r\leq m+m'-n-1$.
Indeed,
\[
r = m'-a+1 = m' + m - k^{\ell} +1 = m' + m - n - 2 +1 = m' + m - n - 1.
\]
Hence, Theorem~\ref{th : main 1-C} applied to the manifolds $M=\CP^m$ and $N=\C^n$ yields the statement of the corollary, that is 
every continuous map $\CP^{k^{\ell}-a}\longrightarrow \C^{k^{\ell}-2}$ admits a local $k$-multiplicity.

\small
\providecommand{\noopsort}[1]{}


\end{document}